\DeclareMathAlphabet\mathbfcal{OMS}{cmsy}{b}{n}
\newcommand{\mathbi}[1]{{\boldsymbol #1}}
\def\N{\mathbb{N}}
\def\R{\mathbb{R}}
\def\err{\mathsf{err}}
\def\stab{\mathfrak{S}}
\def\<{\langle}
\def\>{\rangle}
\def\Poly{\mathbb{P}}
\def\div{{\rm div}}
\def\norm#1#2{\Vert#1\Vert_{#2}}
\newcounter{cst}
\def\bt{\begin{theorem}}
\def\et{\end{theorem}}
\def\bl{\begin{lemma}}
\def\el{\end{lemma}}
\def\bc{\begin{corollary}}
\def\ec{\end{corollary}}
\def\bd{\begin{definition}}
\def\ed{\end{definition}}
\def\br{\begin{remark}}
\def\er{\end{remark}}
\newcommand{\disc}{{\mathcal D}}
\def \hessian{\mathcal H}
\def \hd{\hessian_\disc}
\def \wdspace{H}
\def\cv{K}
\def \symd{\R^{d\times d}}
\def \sym2{\R^{2\times 2}}
\def\cell{K}
\DeclareMathOperator*{\argmin}{argmin}
\newcommand{\mesh}{{\mathcal M}}
\newcommand{\edge}{{\sigma}}
\newcommand{\edges}{{\mathcal F}}              
\newcommand{\edgescv}{{{\edges}_\cv}}  
\newcommand{\edgesext}{{{\edges}_{\rm ext}}} 
\newcommand{\edgesint}{{{\edges}_{\rm int}}}
\newcommand{\x}{\mathbi{x}}
\newcommand{\vertices}{{\mathcal V}}
\newcommand{\bu}{\overline{u}}
\newcommand{\bv}{\overline{v}}
\newcommand{\be}{\begin{equation}}
\newcommand{\ee}{\end{equation}}
\renewcommand{\O}{\Omega}
\renewcommand{\d}{{\,\rm d}}
\newcommand{\ba}{\begin{array}{llll}   }
\newcommand{\bac}{\begin{array}{c}}
\newcommand{\bari}{\begin{array}{r}}
\newcommand{\ea}{\end{array}}
\newcommand{\NORM}[1]{{\left\vert\kern-0.25ex\left\vert\kern-0.25ex\left\vert #1 
    \right\vert\kern-0.25ex\right\vert\kern-0.25ex\right\vert}}
\newcommand{\dm}{\disc_m}  
\def\hat#1{\widehat{#1}}
\newcommand{\bsymb}[1]{{\boldsymbol #1}}
\newcommand{\bfX}{\bsymb{X}}
\newcommand{\bfXd}{\bsymb{X}_{\disc,0}}
\def\assum#1{{\rm\textbf{(A#1)}}}
\def\propty#1{{\rm\textbf{(P#1)}}}
\newtheorem{theorem}{Theorem}[section]
\newtheorem{remark}[theorem]{Remark}
\newtheorem{lemma}[theorem]{Lemma} 
\newtheorem{definition}[theorem]{Definition}
\newtheorem{corollary}[theorem]{Corollary}
\numberwithin{equation}{section}
\def\XXint#1#2#3{{\setbox0=\hbox{$#1{#2#3}{\int}$ }
\vcenter{\hbox{$#2#3$ }}\kern-.6\wd0}}
\newcounter{corr}
\definecolor{violet}{rgb}{0.580,0.,0.827}
\newcommand{\corr}[3]{\typeout{Warning : a correction remains in page
\thepage}
				\stepcounter{corr}        
				{\color{blue}\ifmmode\text{\,\sout{\ensuremath{#1}}\,}\else\sout{#1}\fi}
       {\color{red}#2}
       {\color{violet} #3}}
\newcounter{cexp}
\def\terml#1{T_{\refstepcounter{cexp}\@bsphack
\protected@write\@auxout{}%
           {\string\newlabel{#1}{{\thecexp}{\thepage}}}\thecexp}}
\begin{document}
\title[HDM for fourth order semi-linear elliptic equations]{Hessian discretisation method for fourth order semi-linear elliptic equations: applications to the von K\'{a}rm\'{a}n and Navier--Stokes models}
\author{J\'erome Droniou}
\address{School of Mathematical Sciences, Monash University, Clayton,
	Victoria 3800, Australia.
	\texttt{jerome.droniou@monash.edu}}
\author{Neela Nataraj}
\address{Department of Mathematics, Indian Institute of Technology Bombay, Powai, Mumbai 400076, India.
	\texttt{neela@math.iitb.ac.in}}
\author{Devika Shylaja}
\address{IITB-Monash Research Academy, Indian Institute of Technology Bombay, Powai, Mumbai 400076, India.
	\texttt{devikas@math.iitb.ac.in}}
\maketitle
\begin{abstract} 
This paper deals with the Hessian discretisation method (HDM) for fourth order semi-linear elliptic equations with a trilinear nonlinearity. The HDM provides a generic framework for the convergence analysis of several numerical methods, such as, the conforming and non-conforming finite element methods (ncFEMs) and methods based on gradient recovery (GR) operators. The Adini ncFEM and GR method, a specific scheme that is based on cheap, local reconstructions of higher-order derivatives from piecewise linear functions, are analysed for the first time for fourth order semi-linear elliptic equations with trilinear nonlinearity. Four properties namely, the coercivity, consistency, limit-conformity and compactness enable the convergence analysis in HDM framework that does not require any regularity of the exact solution. Two important problems in applications namely, the Navier--Stokes equations in stream function vorticity formulation and the von K\'{a}rm\'{a}n equations of plate bending are discussed. Results of numerical experiments are presented for the Morley ncFEM and GR method.
\end{abstract}

\noindent{\bf Keywords:} {Hessian discretisation, Navier Stokes equations, von K\'{a}rm\'{a}n equations, plate bending, non-linear equations, finite element, gradient recovery, convergence}

\section{Introduction}
Fourth order linear and non-linear elliptic problems arise in a wide range of application areas that include thin plate theories of elasticity, thin beams and shells. In this paper, we study the approximation of fourth order semi-linear problems with a trilinear nonlinearity and clamped boundary conditions in an abstract setting using the Hessian Discretisation Method (HDM). 

\smallskip

The HDM for fourth order semi-linear equations with a trilinear nonlinearity is a unified framework for the convergence analysis of several numerical methods, such as, the conforming finite element methods (FEMs), Adini and Morley non-conforming finite element methods (ncFEMs), and methods based on gradient recovery (GR) operators \cite{ciarlet1978finite,BL_FEM,BL_MFEM}. The framework of HDM is based on a quadruplet, referred to as a Hessian discretisation (HD), that involves a discrete space, a reconstructed function, a reconstructed gradient and a reconstructed Hessian. The Hessian schemes (HS) are discrete versions of the weak formulation obtained by replacing the continuous space, function, gradient and Hessian by corresponding discrete ones. The HDM allows a complete convergence analysis for families of numerical methods through a small number of properties: coercivity, consistency, limit-conformity and compactness.

\smallskip
%

\smallskip   
 The abstract problem considered in this article applies in particular to the stream function vorticity formulation of the incompressible 2D Navier--Stokes problem \cite{BRR,NS_FE79,CCGMNN2020} and to the von K\'{a}rm\'{a}n equations \cite{ciarlet_plate}. There are advantages in using the stream function vorticity formulation of the incompressible Navier--Stokes equations to compute 2D flows: the continuity equation is automatically satisfied, only one vorticity (transport) equation has to be solved, the streamlines of the flow are given by level curves of the stream function, and the vorticity is a conserved quantity. The von K\'{a}rm\'{a}n equations is a system of fourth order semi-linear elliptic equations that describes the bending of very thin elastic plates. The numerical analysis of von K\'{a}rm\'{a}n equations has been studied using conforming FEMs in \cite{Brezzi,ng1}, Morley ncFEM in \cite{ng2,CCGMNN2020}, mixed FEMs \cite{BRR,TM76}, a $C^0$ interior penalty method in \cite{SBMNARLS} and a discontinuous Galerkin method in \cite{CCGMNN18}. To the best of our knowledge, {\it the Adini ncFEM and the method based on GR operator have not been studied in literature for fourth order non-linear elliptic equations}. The Adini ncFEM and the GR method are analysed in the HDM framework for the fourth order linear equations \cite{HDM_linear, DS_HDM} along with the conforming FEMs, Morley ncFEM and finite volume methods.

\smallskip
 The analysis via error estimate has been considered in literature, for conforming, nonconforming, discontinuous Galerkin FEMs and mixed FEM for the Navier--Stokes equations and von K\'{a}rm\'{a}n equations under the assumption that $(i)$ the exact solution has extra regularity and $(ii)$ the linearised problem around the exact solution is well-posed. In this article, a different approach is employed for the convergence analysis using the four properties associated with the HD. {\it The convergence analysis is based on compactness techniques approach that does not rely on any smoothness or structural assumption on the continuous solution.} In this approach, the solution to the weak formulation is obtained as the limit of a sequence of solutions to the approximate problem; the existence of solution for the continuous model is therefore established as a consequence of this convergence analysis. {\it  To the best of our knowledge, this is the first time that this approach is considered for the numerical analysis of von K\'{a}rm\'{a}n equations.} 

\smallskip
The contributions of this article are the following:
\begin{itemize}
		\item {\it Convergence analysis} by compactness techniques for an abstract semi-linear fourth order model, without any extra-regularity assumption on the exact solution. This analysis employs only four properties, namely, the  coercivity, consistency,  limit-conformity and compactness. 
		
		\item Design and analysis of {\it the Adini ncFEM} and {\it GR method} for fourth order semi-linear elliptic equations.
		
		\item  {\it A unified framework} provided by HDM for fourth order semi-linear elliptic equations with a trilinear nonlinearity, in an abstract set-up that applies to several numerical methods. 
		\item {\it Applications} to the stream function vorticity formulation of 2D Navier--Stokes equation and the von K\'{a}rm\'{a}n equations using the examples of HDM, namely, conforming FEMs, Adini and Morley ncFEMs, and GR methods.
	\item \emph{Numerical experiments} on the approximation of Navier--Stokes equation and von K\'{a}rm\'{a}n equations.
\end{itemize}

\medskip

The paper is organised as follows. The abstract problem, some examples and an illustration of application of the main results are presented in Section \ref{abstract_setting}. Section \ref{sec.HDM} deals with the HDM for fourth order non-linear problems and some examples that fit into the HDM framework. The four properties that are needed for the convergence analysis of HDM are described in Section \ref{sec.properties}. These properties are verified for several numerical methods. The main result of the paper using  compactness techniques approach for the convergence analysis in the abstract framework is presented in Sections \ref{con.reg}. Section \ref{sec.numericalresults} provides the results of numerical experiments for the method based on GR operators and Morley ncFEM. A section on conclusion (Section \ref{sec:conclusion}) and the proof of the properties for ncFEMs and GR methods (Appendix \ref{appen.prop}) complete the paper.

\smallskip
\textbf{Notations}. Let $\O \subset \R^d$ $ (d\ge1)$ be a bounded domain  with boundary $\partial \Omega$ and let the outer normal be denoted by $n$. For brevity, we follow the Einstein summation convention that implies summation over a set of indexed terms in a product of vectors, tensors or differential operators unless otherwise stated. The scalar product on $\symd$ is defined by $\xi:\phi=\xi_{ij}\phi_{ij}$. For a function $\xi : \O \rightarrow \symd$, denoting the Hessian operator by $\hessian$, set $\hessian : \xi = \partial _{ij}\xi_{ij}$. For $a,b\in\R^d$, let $a \otimes b$ denotes the 2-tensor with coefficients $a_ib_j$. The Lebesgue measure of a measurable set $E\subset \R^d$ (resp. the set of all matrices in $L^2(\O)^{d \times d}$) is denoted by $|E|$ (resp. $L^2(\O;\symd)$). The standard $L^2$ inner product and norm (applied on $L^2(\O)$, $L^2(\O;\R^d)$, and $L^2(\O;\symd)$) are denoted by $(\cdot,\cdot)$ and $\|{\cdot}\|$. For $r>0$, let $\norm{{\cdot}}{L^4}$ denotes the norm in $L^4(\O)^r$.

\section{Model problem and application of the main results}\label{abstract_setting}
We present here the abstract setting of weak formulation of semi-linear fourth order elliptic problems with a trilinear nonlinearity and clamped boundary conditions. An example of application of the main result is also stated at the end of this section. 

\medskip

Let $k\ge 1$ be an integer and, for $E$ a vector space, set $\bsymb{E}=E^k$. For simplicity of notation, the norm in $\bsymb{E}$ is denoted by $\| \cdot \|_{E}$. Letting $X:=H^2_0(\O)$, the continuous abstract problem seeks $\Psi \in \bfX$ such that
\be \label{abstract_weak}
\mathcal{A}(\hessian \Psi,\hessian \Phi)+\mathcal{B}(\hessian \Psi, \nabla \Psi, \nabla \Phi)=\mathcal{L}(\Phi) \quad \forall \,\Phi \in \bfX,
\ee
where $\hessian \Psi$ and $\nabla \Psi$ are to be understood component-wise: for $\Psi=(\psi_1,\cdots,\psi_k)$, $\hessian \Psi=(\hessian \psi_1,\cdots,\hessian \psi_k)$ and $\nabla \Psi=(\nabla \psi_1,\cdots,\nabla \psi_k)$. Let the following assumptions hold:
\begin{itemize}
	\item[\assum{1}] $\mathcal{A}(\cdot,\cdot)$ is a  continuous and coercive bilinear form on $\bsymb{L}^2(\O;\symd) \times \bsymb{L}^2(\O;\symd)$.
	\item[\assum{2}] $\mathcal{B}(\cdot,\cdot,\cdot)$ is a continuous trilinear form on $\bsymb{L}^2(\O;\symd) \times \bsymb{L}^4(\O;\R^d) \times \bsymb{L}^4(\O;\R^d)$.
	\item[\assum{3}] $\mathcal{B}(\Xi,\Theta,\Theta)=0$ for all $ \Xi \in \bsymb{L}^2(\O;\symd)$ and $\Theta \in \bsymb{L}^4(\O;\R^d)$.
	\item[\assum{4}] $\mathcal{L}(\cdot)$ is a continuous linear form on $\bsymb{L}^2(\O)$.
\end{itemize}

\subsection{Examples}\label{examples}
We show here that the abstract formulation \eqref{abstract_weak} covers the stream function vorticity formulation of the incompressible 2D Navier--Stokes problem and von K\'{a}rm\'{a}n equations.

\subsubsection{Navier--Stokes problem \cite{Lions_NS,BRR}:} For given $f \in L^2(\O)$ where $\Omega \subsetneq \R^2$ and viscosity $\nu >0$, let $u$ solve
\begin{subequations}\label{NS_eq}
\begin{align} 
\nu \Delta^2u + \frac{\partial}{\partial x_1}\bigg((-\Delta u)\frac{\partial u}{\partial x_2}\bigg)&- \frac{\partial}{\partial x_2}\bigg((-\Delta u)\frac{\partial u}{\partial x_1}\bigg)=f \mbox{ in } \O\\
&u=\frac{\partial u}{\partial n}=0 \mbox{ on } \partial\O.
\end{align}
\end{subequations}
Here, the biharmonic operator $\Delta^2$ is defined by $\Delta^2\phi=\phi_{xxxx}+\phi_{yyyy}+2\phi_{xxyy}$. The weak formulation to \eqref{NS_eq} seeks $u \in H^2_0(\O)$ such that
\be \label{NS_weak}
\mathcal{A}(\hessian u,\hessian v)+\mathcal{B}(\hessian u, \nabla u, \nabla v)=\mathcal{L}(v) \quad \forall  v \in H^2_0(\O),
\ee
where for all $\xi,$ $\chi \in L^2(\O;\sym2)$ and $\phi,$ $ \theta \in L^2(\O,\R^2),$
$$\mathcal{A}(\xi,\chi)=\nu \int_{\O}^{}\xi: \chi \d \x, \quad \mathcal{B}(\xi, \phi,\theta)=\int_{\O}^{}\mbox{tr}(\xi) \phi\cdot \mbox{rot}_{\pi/2}(\theta)\d\x,\quad\mathcal{L}(v)=\int_{\O}^{}fv\d\x. $$
Note that $\mbox{tr}(\xi)$ means the trace of the matrix $\xi$ and, for $\theta=(\theta_1,\theta_2)$,
$\mbox{rot}_{\pi/2}(\theta)=\big(-\theta_2, \theta_1\big)^t.$
It is easy to check that $\mathcal{A}(\cdot,\cdot)$, $\mathcal{B}(\cdot,\cdot,\cdot)$ and $\mathcal{L}(\cdot)$ satisfy \assum{1}-\assum{4} with $k=1$. The continuity of $\mathcal{B}(\cdot,\cdot,\cdot)$ follows using the generalized H\"{o}lder's inequality given by $\mathcal{B}(\xi, \phi,\theta) \le \norm{\xi}{}\norm{\phi}{L^4}\norm{\theta}{L^4}$. 

\subsubsection{The von K\'{a}rm\'{a}n equations \cite{ciarlet_plate}: } Given $f \in L^2(\Omega)$ where $\Omega \subsetneq \R^2$, seek the vertical displacement  $u$ and the  Airy stress function $v$ such that  
\begin{subequations}\label{vk_eq}
	\begin{align}%
	\Delta^2u&=[u,v]+f \mbox{ in }\O, \label{vk_eq1}\\
	\Delta^2v&=-\frac{1}{2}[u,u] \mbox{ in }\O,\label{vk_eq2}
	\end{align}
\end{subequations}
with clamped boundary conditions
\be \label{clamped_bc}
u=\frac{\partial u}{\partial n}=v=\frac{\partial v}{\partial n}=0 \mbox{ on } \partial\O.
\ee
The von K\'{a}rm\'{a}n bracket $[\cdot,\cdot]$ is defined by
$[\xi,\chi]=$$\xi_{xx}\chi_{yy}+\xi_{yy}\chi_{xx}-2\xi_{xy}\chi_{xy}$$=\mbox{cof}(\hessian\xi):\hessian\chi$, where $\mbox{cof}(\hessian \xi)$ denotes the co-factor matrix of $\hessian \xi$. Then a weak formulation corresponding to \eqref{vk_eq} seeks $u, v \in H^2_0(\O)$ such that
\begin{subequations}\label{vk_weak}
	\begin{align}
	a(u,\phi_1)+2b(u,\phi_1,v)&=(f,\phi_1) \quad\forall \phi_1 \in H^2_0(\O), \label{vk_weak1} \\
	2a(v,\phi_2)-2b(u,u,\phi_2)&=0 \quad\forall  \phi_2 \in H^2_0(\O), \label{vk_weak2}
	\end{align}
\end{subequations}
where for all $\xi, \chi, \phi \in H^2_0(\O),$
\[
\begin{aligned}
&a(\xi, \chi):=\int_\O \hessian \xi : \hessian \chi\d\x,\;b(\xi,\chi,\phi):=\frac{1}{2}\int_\O\mbox{cof}(\hessian \xi)\nabla\chi \cdot\nabla \phi\d\x=-\frac{1}{2}\int_\O[\xi,\chi]\phi\d\x.
\end{aligned}
\]
Note that $b(\cdot,\cdot,\cdot)$ is derived using the divergence-free rows property \cite{evans_pde} and is symmetric with respect to all variables.  
Summing together \eqref{vk_weak1} and \eqref{vk_weak2}, we obtain an equivalent formulation
in the vector form \eqref{abstract_weak} with $k=2$ that seeks $\Psi=(u,v) \in \bfX$ such that 
\be \label{vk_weak_vector}
\mathcal{A}(\hessian \Psi,\hessian \Phi)+\mathcal{B}(\hessian \Psi,\nabla \Psi, \nabla \Phi)=\mathcal{L}(\Phi) \quad \forall \Phi \in \bfX,
\ee
where for all $\Phi=(\phi_1,\phi_2)$, $\Lambda=(\lambda_1,\lambda_2),\,\Gamma=(\gamma_1,\gamma_2),\, \Theta=(\theta_1,\theta_2)$ and $\Xi=(\xi_1, \xi_2)$
with $\Lambda, \Gamma \in 
\bsymb{L}^2(\O;\sym2)$ and $\Xi,$ $\Theta \in \bsymb{L}^2(\O,\R^2),$
\begin{subequations}\label{vk_weak_ABL}
	\begin{align}
	\mathcal{A}(\Lambda, \Gamma):&=\int_\O \lambda_1 :\gamma_1\d\x +2\int_\O  \lambda_2 : \gamma_2\d\x, \label{vk_weak_A}\\
	\mathcal{B}(\Lambda,\Xi,\Theta):&=\int_\O\mbox{cof}(\lambda_1)\theta_1\cdot\xi_2\d\x- \int_\O\mbox{cof}(\lambda_1)\xi_1 \cdot\theta_2\d\x,\mbox{ and} \label{vk_weak_B}\\
	\mathcal{L}(\Phi):&=(f,\phi_1) \label{vk_weak_L}.
	\end{align}
\end{subequations}
The assumptions \assum{1}-\assum{4} are easy to verify for this example.
 
 \begin{remark}\label{remark.vk.wf}
The more commonly used equivalent weak formulation of the von K\'{a}rm\'{a}n model \cite{Brezzi,ng1,ng2} \eqref{vk_eq} uses a different formulation of the non-linearity in \eqref{vk_weak1}. Specifically, it seeks $(u,v) \in \bfX$ such that 
\begin{subequations}\label{vk_usualweak}
	\begin{align}
	a(u,\phi_1)+2b(u,v,\phi_1)&=(f,\phi_1) \quad\forall \phi_1 \in H^2_0(\O), \label{vk_usualweak1} \\
	a(v,\phi_2)-b(u,u,\phi_2)&=0 \quad\forall \phi_2 \in H^2_0(\O) \label{vk_usualweak2}.
	\end{align}
\end{subequations}
An advantage of \eqref{vk_weak} is that it ensures the proper cancellation in the trilinear term, in a purely algebraic way (corresponding to \assum{3}) without further integration-by-parts. As a consequence, this cancellation, which is at the core of \emph{a priori} estimates on the solution, directly transfers to the discrete level thus avoiding integration-by-parts over $\Omega$. This formulation of the non-linear term is similar in spirit to what is usually done for finite element discretisations of the Navier--Stokes equations, see \cite{temam_NS}. 
\end{remark}
\subsection{Example of application of the main results}
Application of the main results of this article for the Adini ncFEM are stated in this section in a simplified way. Let $\Omega$ be a polygonal domain and $\mesh$ be a conforming mesh \cite{gdm} of rectangles. For any cell $\cell\in\mesh$, let $h_\cell$ denotes the diameter of $\cell$ and let $h:=\max_{K\in\mesh}h_K$ be the mesh-size.

\medskip

The Adini ncFEM corresponding to \eqref{abstract_weak} seeks $\Psi_h \in V_h^k$ such that
\begin{align}\label{Adini.weak}
&\mathcal{A}(\hessian_\mesh \Psi_h,\hessian_\mesh\Phi_h)+\mathcal{B}(\hessian_\mesh \Psi_h, \nabla \Psi_h, \nabla \Phi_h)=\mathcal{L}(\Phi_h) \quad \forall \Phi_h \in V_h^k,
\end{align}
where $V_h$ is the nonconforming Adini finite element space and $\hessian_\mesh \Phi_h$ is the broken Hessian of $\Phi_h$ (see Section \ref{sec:ncfem} for details). Note that $\hessian_\mesh \Phi_h$ and $\nabla \Phi_h$ act component-wise whenever it is applied to a vector-valued function.

\medskip

The next result states a convergence result that does not require any smoothness of the exact solution or the assumption that the exact solution is regular, that is, the linearized problem around the exact solution is well-posed for the Adini ncFEM. The result is stated in Theorem \ref{convergence_abstract_non-linear} in a more generic way in the HDM framework which includes conforming FEMs, Adini and Morley ncFEMs, and the GR methods.

\smallskip

\begin{theorem}[Convergence of Adini FEM for von K\'{a}rm\'{a}n and Navier--Stokes equations]
	Let the assumptions $\assum{1}-\assum{4}$ hold true. Then there exists at least one solution $\Psi_h$ to \eqref{Adini.weak}. Moreover, as $h \rightarrow 0$, up to a subsequence of $\mesh_h$, there exists a solution $\Psi$ of the abstract problem \eqref{abstract_weak} such that $\Psi_h \rightarrow \Psi$ in $L^2(\O)^k$, $\nabla \Psi_h \rightarrow \nabla \Psi$ in $L^4(\O;\R^d)^k$, and $\hessian_{\mesh_h} \Psi_h \rightarrow \hessian \Psi$ in $L^2(\O; \R^{d \times d})^k$.
\end{theorem}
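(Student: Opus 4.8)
The plan is to establish the result by the compactness method: the family of Adini Hessian discretisations is shown to enjoy the four properties---coercivity, consistency, limit-conformity and compactness---after which the limiting argument of the abstract Theorem \ref{convergence_abstract_non-linear} can be run. The coercivity, consistency and limit-conformity of the Adini ncFEM are established exactly as in the linear analysis \cite{HDM_linear,DS_HDM} (and recalled in Appendix \ref{appen.prop}); below I sketch how these properties, combined with the algebraic cancellation \assum{3}, produce existence of discrete solutions, uniform estimates, and convergence. A noteworthy byproduct is that the limit $\Psi$ is a solution of \eqref{abstract_weak}, so existence for the continuous model is obtained with no regularity or structural assumption.

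\emph{Existence and a priori estimate.} Testing \eqref{Adini.weak} with $\Phi_h=\Psi_h$ and invoking \assum{3}, the trilinear term $\mathcal{B}(\hessian_\mesh\Psi_h,\nabla\Psi_h,\nabla\Psi_h)$ vanishes, so the coercivity in \assum{1}, the continuity of $\mathcal{L}$ in \assum{4}, and a discrete Poincaré inequality yield
\[
\alpha\|\hessian_\mesh\Psi_h\|^2\le\mathcal{A}(\hessian_\mesh\Psi_h,\hessian_\mesh\Psi_h)=\mathcal{L}(\Psi_h)\le C\|\hessian_\mesh\Psi_h\|,
\]
hence a bound on $\|\hessian_\mesh\Psi_h\|$ independent of $h$. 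The same estimate, applied along a homotopy of the discrete operator, confines every candidate solution to a fixed ball of $V_h^k$; existence of at least one $\Psi_h$ then follows from a topological-degree (Brouwer) argument for the continuous map on the finite-dimensional space $V_h^k$.

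\emph{Compactness and identification of the limit.} The uniform discrete $H^2$-bound, together with the compactness and limit-conformity of the Adini element, gives, up to a subsequence, a limit $\Psi\in\bfX$ with $\Psi_h\to\Psi$ in $L^2(\O)^k$, with $\nabla\Psi_h\to\nabla\Psi$ strongly in $L^4(\O;\R^d)^k$ (using the compact embedding $H^2\hookrightarrow W^{1,4}$ in dimension two), and with $\hessian_\mesh\Psi_h\rightharpoonup\hessian\Psi$ weakly in $L^2(\O;\symd)^k$. Limit-conformity is precisely the property guaranteeing that this weak limit is the genuine Hessian of an $H^2_0$ function; this is the delicate structural ingredient, since the Adini reconstructions are nonconforming and one must control the contribution of the interelement jumps.

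\emph{Passage to the limit and strong convergence.} For fixed $\Phi\in\bfX$, consistency furnishes interpolants $\Phi_h\in V_h^k$ with $\Phi_h\to\Phi$, $\nabla\Phi_h\to\nabla\Phi$ and $\hessian_\mesh\Phi_h\to\hessian\Phi$ in the relevant norms. The linear terms pass to the limit by weak--strong pairing and $\mathcal{L}(\Phi_h)\to\mathcal{L}(\Phi)$. The main obstacle is the trilinear term $\mathcal{B}(\hessian_\mesh\Psi_h,\nabla\Psi_h,\nabla\Phi_h)$: the two strongly $L^4$-convergent gradient factors produce an object converging strongly in $L^2$, which then pairs with the weakly $L^2$-convergent $\hessian_\mesh\Psi_h$, so continuity \assum{2} gives $\mathcal{B}(\hessian_\mesh\Psi_h,\nabla\Psi_h,\nabla\Phi_h)\to\mathcal{B}(\hessian\Psi,\nabla\Psi,\nabla\Phi)$ and $\Psi$ solves \eqref{abstract_weak}. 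Finally, to upgrade the Hessian convergence from weak to strong, I test \eqref{Adini.weak} with $\Psi_h$ and \eqref{abstract_weak} with $\Psi$; by \assum{3} both nonlinear terms drop, giving $\mathcal{A}(\hessian_\mesh\Psi_h,\hessian_\mesh\Psi_h)=\mathcal{L}(\Psi_h)\to\mathcal{L}(\Psi)=\mathcal{A}(\hessian\Psi,\hessian\Psi)$. Coercivity then closes the argument through
\[
\alpha\|\hessian_\mesh\Psi_h-\hessian\Psi\|^2\le\mathcal{A}(\hessian_\mesh\Psi_h-\hessian\Psi,\hessian_\mesh\Psi_h-\hessian\Psi)\to 0,
\]
the cross terms tending to $\mathcal{A}(\hessian\Psi,\hessian\Psi)$ by weak convergence, which yields $\hessian_\mesh\Psi_h\to\hessian\Psi$ strongly in $L^2(\O;\symd)^k$.
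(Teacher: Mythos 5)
Your proof is correct and, in substance, it is the paper's proof: the result is obtained by verifying the four HDM properties for the Adini element (Theorem \ref{th.MorleyAdini}) and then running the abstract compactness argument of Theorem \ref{convergence_abstract_non-linear}. Your identification of the limit via limit-conformity, the weak--strong passage to the limit in the trilinear term, and the energy argument upgrading the Hessian convergence from weak to strong coincide with Steps 2--4 of the paper. The one genuinely different ingredient is Step 1, the existence of a discrete solution. The paper freezes the Hessian entry of $\mathcal{B}$, solves the resulting linearized problem \eqref{abstract_HS2} by Lax--Milgram (its coercivity being a consequence of \assum{1} and the cancellation \assum{3}), and applies Brouwer's fixed point theorem to the solution map $F$, which the uniform bound \eqref{abstract_HS_stability} shows to map $\bfXd$ into a closed ball. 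You instead work directly with the nonlinear discrete operator and invoke a homotopy/topological-degree argument, the same a priori bound (again from \assum{3}) ensuring that no solution along the homotopy reaches the boundary of a fixed ball. Both routes are standard and of comparable length; the paper's buys the stability estimate \eqref{abstract_HS_stability} as a byproduct of the fixed-point setup, while yours avoids introducing the auxiliary linearized problem at all.

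Two small points of wording deserve correction. First, for the strong $L^4$ convergence of $\nabla\Psi_h$ you cannot literally appeal to the compact embedding $H^2\hookrightarrow W^{1,4}$, since the Adini functions $\Psi_h$ do not belong to $H^2(\O)$; the correct tool (which you also cite) is the discrete compactness property of the Adini HD, proved through the discrete Rellich theorem of Lemma \ref{lemma.dG}. Second, since $\mathcal{B}$ is an abstract trilinear form, the statement that ``the two strongly $L^4$-convergent gradient factors produce an object converging strongly in $L^2$'' presumes an integral/pointwise-product structure that \assum{2} does not provide; in the abstract setting one should, as in the paper, split $\mathcal{B}(\hessian_\mesh\Psi_h,\nabla\Psi_h,\nabla\Phi_h)-\mathcal{B}(\hessian\Psi,\nabla\Psi,\nabla\Phi)$ into three differences, bound the two containing a strongly convergent factor by continuity, and handle the remaining one by noting that $\mathcal{B}(\cdot,\nabla\Psi,\nabla\Phi)$ is a continuous linear functional on $\bsymb{L}^2(\O;\symd)$, against which the Hessians converge weakly. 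Neither point affects the validity of your overall argument.
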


%
%
\section{The Hessian discretisation method} \label{sec.HDM}
	This section is devoted to the presentation of the HDM for fourth order non-linear elliptic equations, the design of which is adapted from the HDM for linear problems (see Remark \ref{remark.linear}). 
	
	\begin{definition}[Hessian discretisation]\label{HD}~
	A Hessian discretisation for fourth order non-linear elliptic equations with clamped boundary conditions is a quadruplet $\disc=(X_{\disc,0},\Pi_\disc,\nabla_\disc,\hd)$ such that
	\begin{itemize}
		\item  $X_{\disc,0}$ is a finite dimensional real vector space, 
		\item the linear mapping $\Pi_\disc:X_{\disc,0}  \rightarrow L^2(\O)$ gives a reconstructed discrete function in $L^2(\O)$ from vectors in $X_{\disc,0},$
		\item the linear mapping $\nabla_\disc:X_{\disc,0}  \rightarrow L^4(\O;\R^d)$ gives a reconstructed discrete gradient in $L^4(\O;\R^d)$ from vectors in  $X_{\disc,0}$,
		\item the linear mapping $\hd:X_{\disc,0}  \rightarrow L^2(\O;\R^{d \times d})$ gives a reconstructed discrete version of Hessian in $L^2(\O;\R^{d \times d})$ from $X_{\disc,0}$. The operator $\hd$ is such that $\norm{{\cdot}}{\disc}=:\norm{\hd \cdot}{}$ is a norm on $X_{\disc,0}.$
	\end{itemize}
\end{definition}
 In order to approximate \eqref{abstract_weak} by the HDM, consider a HD $\disc=(X_{\disc,0},\Pi_\disc,\nabla_\disc,\hd)$ in the sense of Definition \ref{HD}. The associated HS for \eqref{abstract_weak} seeks $\Psi_\disc \in \bfXd:=X_{\disc,0}^k$ such that
\begin{align}
&\mathcal{A}(\hd \Psi_\disc,\hd \Phi_\disc)+\mathcal{B}(\hd \Psi_\disc, \nabla_\disc \Psi_\disc, \nabla_\disc \Phi_\disc)=\mathcal{L}(\Pi_\disc\Phi_\disc) \quad \forall \Phi_\disc \in \bfXd,\label{abstract_HS}
\end{align}
where $\hd \Phi_\disc$, $\nabla_\disc \Phi_\disc$ and $\Pi_\disc \Phi_\disc$ act component-wise in the sense that if $\Phi_\disc=(\phi_{\disc,1}, \cdots,\phi_{\disc,k})$ and $F_\disc\in\{\Pi_\disc,\nabla_\disc,\hd\}$, then $F_\disc \Phi_\disc=(F_\disc\phi_{\disc,1},\cdots,F_\disc\phi_{\disc,k})$. 

\subsection{Examples of HDMs}\label{sec:class.in.HDM}

This section deals with examples of schemes that fit into the HDM framework. Let us begin with mesh notation.

\smallskip

Let $\mesh$ be a conforming mesh \cite{gdm} of triangles or rectangles (that depends on the method under consideration). For any $\cell\in\mesh$, the center of mass of $K$ is defined by $\overline{\x}_\cell$, $|\cell|>0$ denote the measure of $\cell$,  and $n_K$ be the outer unit normal to $K$. Let $\edges$ be the set of all edges of the mesh and the measure of $\edge\in\edges$ be denoted by $|\edge|$. Let the set of vertices in $\mesh$ be denoted  by $\vertices$. Let $\mathcal V_{\rm int}$ (resp. $\mathcal V_{\rm ext}$) denote the set of internal vertices of $\mesh$ (resp. vertices on $\partial \O$). The meshes are assumed to be regular \cite{ciarlet1978finite} in the classical sense that the ratio of the diameter and the radius of the largest ball centered at $\overline{\x}_\cell$ and included in $\cell$ is uniformly bounded with a bound independent of $h$. The notation ``$A\lesssim B$'' means that there exists a generic constant $C$ independent of the mesh parameter $h$ such that $A \le CB$.

Let  $\ell\ge 0$ be an integer and $\cell \in \mesh$. Let the space of polynomials of degree at most $\ell$ in $\cell$ be denoted by $\mathbb{P}_\ell(\cell)$ and let $\mathbb{P}_\ell(\mesh)$ be the broken polynomial space.
\subsubsection{\bf Conforming FEM}\label{sec:conf}
The finite element space $V_h$ is a subspace of $H^2_0(\O)$. A HD is defined by $X_{\disc,0}=:V_h$ and, for $v\in X_{\disc,0}$, $\Pi_\disc v=v$, $\nabla_\disc v=\nabla v$ and $\hd v=\hessian v$. 
Classical $C^1$ elements that are used for the approximation the solution of fourth order elliptic problems are the Argyris and Bogner--Fox--Schmit finite elements, see \cite{ciarlet1978finite} for more details.

\subsubsection{\bf Non-conforming FEM}\label{sec:ncfem}

We show here that two ncFEMs in dimension $d=2$, namely the Morley FEM and the Adini FEM, fit into the framework of HDM. 

\smallskip

\textsc{(i) The Morley element \cite{ciarlet1978finite}:} 
\begin{figure}
	\begin{center}
		\begin{minipage}[b]{0.35\linewidth}
			{\includegraphics[width=4.5cm]{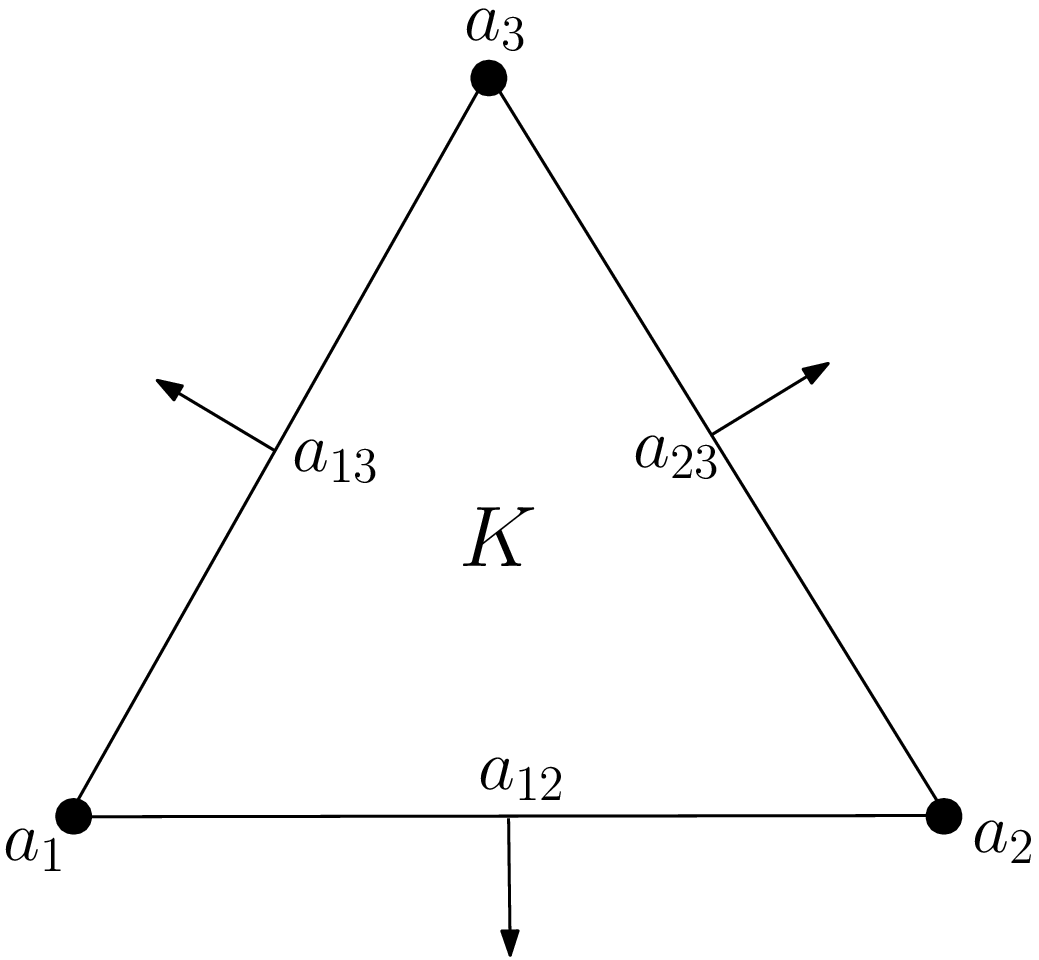}}
		\end{minipage}
		\qquad \quad
		\begin{minipage}[b]{0.4\linewidth}
			{\includegraphics[width=5.5cm]{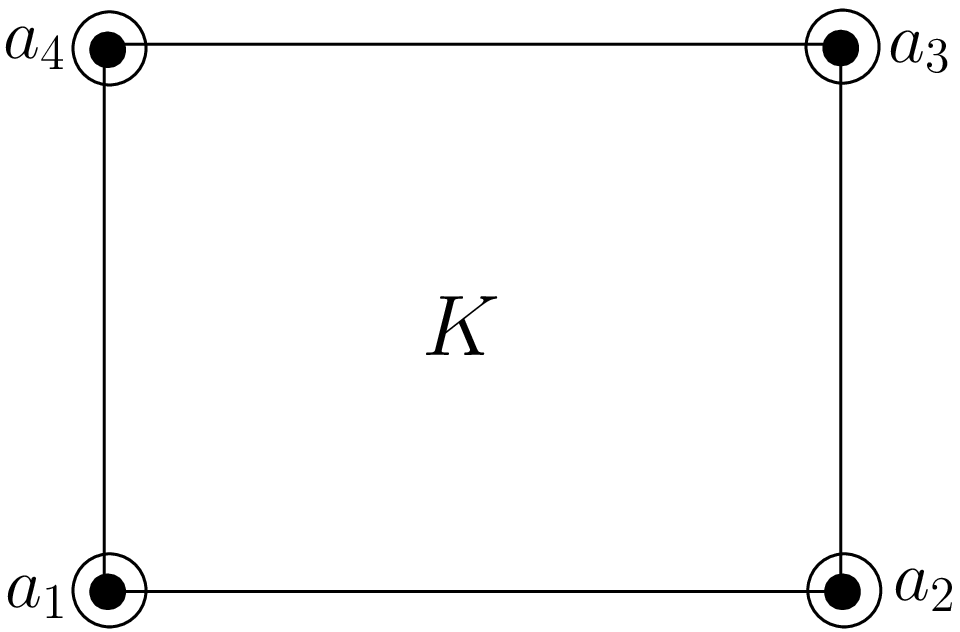}}
		\end{minipage}
		\caption{Morley element (left) and Adini element (right)}\label{fig:MorleyAdiniElement}
	\end{center}
\end{figure}
Let $
 \llbracket \phi \rrbracket$ be the jump of $\phi$ across the edges. The nonconforming Morley element space associated with the mesh $\mesh$ is defined by
\begin{align*}
V_h & =: \big\{\phi \in \Poly_2(\mesh) |\phi \mbox{ is continuous at } \mathcal V_{\rm int}\mbox{ and vanishes at }\mathcal V_{\rm ext},\, \\ &\qquad\forall \edge \in \edgesint, \, \int_{\edge}^{}\bigg\llbracket\frac{\partial \phi}{\partial n}\bigg\rrbracket ds=0;\,\forall \edge \in \edgesext, \, \int_{\edge}^{}\frac{\partial \phi}{\partial n}ds=0\big\}.
\end{align*}
On each triangle, the local degrees of freedom are the values of the function at each vertex and the values of the normal derivatives at the midpoints of edges. See Figure \ref{fig:MorleyAdiniElement} (left) for an illustration.

\begin{definition}[HD for the Morley triangle]\label{def.HD.Morley}
	Each $v_\disc\in X_{\disc,0}$  is a vector of degrees of freedom at the vertices of the mesh (with zero values at boundary vertices) and at the midpoint of the edges opposite to these vertices (with zero values at midpoint of the boundary edges). The function $\Pi_\disc v_\disc$ is such that $(\Pi_\disc v_\disc)_{|K}\in \mathbb{P}_2(\cell)$ and $\Pi_\disc v_\disc$ (resp. its normal derivatives) takes the values at the vertices (resp.\ at the edge midpoints) dictated by $v_\disc$, $\nabla_\disc v_\disc= \nabla_{\mesh}(\Pi_\disc v_\disc)$ is the broken gradient and $\hd v_\disc= \hessian_{\mesh}(\Pi_\disc v_\disc)$ is the broken Hessian.
\end{definition}

\textsc{(ii) The Adini element \cite{ciarlet1978finite}:} 
The Adini finite element space is the subspace of $H^1_0(\O) \cap C^0(\overline{\O})$ defined by
\begin{align*}
V_h  =: \{&v_h \in L^2(\O);\, v_h\rvert_\cell \in \mathbb{P}_\cell \,\forall \,\cell \in \mesh, v_h \mbox{ and }\nabla v_h \mbox{ are continuous }\\ &\mbox{ at the vertices $\vertices$, and vanish at the vertices in }\vertices_{\rm ext}\},
\end{align*}
where $\mathbb{P}_\cell := \Poly_3(K) \oplus \{x_1x^3_2\} \oplus\{x^3_1x_2\}$. The set of degrees of freedom in each cell are the values of function and all first order derivatives at each vertex. This is shown in Figure \ref{fig:MorleyAdiniElement} (right).
\begin{definition}[HD for the Adini rectangle]\label{def.HD.Adini}
	Each $v_\disc\in X_{\disc,0}$ is a vector of three values at each vertex of the mesh (with zero values at boundary vertices), corresponding to function and gradient values, $\Pi_\disc v_\disc$ is the function such that $(\Pi_\disc v_\disc)_{|K}\in \mathbb{P}_\cell$ and its derivatives take the values at the vertices dictated by $v_\disc$, $\nabla_\disc v_\disc=\nabla(\Pi_\disc v_\disc)$ and $\hd v_\disc= \hessian_{\mesh}(\Pi_\disc v_\disc)$.
\end{definition}

\subsubsection{\bf Method based on GR operators}\label{sec.grmethod}
In this section, we consider the scheme based on a gradient reconstruction using biorthogonal systems inspired by \cite{BL_FEM,HDM_linear}. This method is attractive as the approximation space consists of continuous piecewise linear functions and the discrete Hessian is constructed by using a GR operator.
Let $(V_h,Q_h,I_h,\stab_h)$ be a quadruplet of a finite element space $V_h\subset H^1_0(\O)$, a projector $Q_h:L^2(\O)\to V_h$, an interpolant $I_h:{H^2_0(\O)}\to V_h$
and a function $\stab_h\in L^\infty(\O;\R^d)$ that stabilises the reconstructed Hessian such that,
\begin{itemize}
	\item[\propty{0}] [\emph{Strucure of $V_h$ and $I_h$}] For all $z \in V_h$, $\norm{\nabla z}{}\lesssim h^{-1}\norm{z}{}$
	and, for $\varphi \in H^2_0(\O)$, 
	$\norm{\nabla I_h \varphi-\nabla \varphi}{} \lesssim h \norm{\varphi}{H^2(\O)}$. 
	\item[\propty{1}] [\emph{Stability of $Q_h$}] For $\phi \in L^2(\O)$,  $\norm{Q_h \phi}{}
	\lesssim  \norm{\phi}{}.$ 
	\item[\propty{2}] [\emph{$Q_h\nabla I_h$ approximates $\nabla$}] 
	There exists $W$ densely embedded in $H^3(\Omega) \cap H^2_0(\Omega)$ such that $\norm{Q_h\nabla I_h \psi-\nabla \psi}{} \lesssim h^2\norm{\psi}{W}$ for all $\psi\in W$.
	\item[\propty{3}] [\emph{$H^1$ approximation property of $Q_h$}] For $w \in {H^2(\O)\cap H^1_0(\O)}$, 
	$\norm{\nabla Q_hw-\nabla w}{} \lesssim h\norm{w}{H^2(\O)}.$
	\item[\propty{4}] [\emph{Asymptotic density of $[(Q_h\nabla-\nabla)(V_h)]^\bot$}] Setting $N_h=[(Q_h\nabla-\nabla)(V_h)]^\bot$, where the orthogonality is with respect to  the $L^2(\O;\R^d)$-inner product, the following approximation property holds:
	\[
	\inf_{\mu_h\in N_h}\norm{\mu_h-\varphi}{}\lesssim h\norm{\varphi}{H^1(\O)}\quad \forall  \varphi \in H^1(\O;\R^d).
	\]
	\item[\propty{5}] [\emph{Stabilisation function}] $1\le |\stab_h| \lesssim 1$ and, for all $\cell \in \mesh$, 
	$$\left[\stab_{h|K}\otimes (Q_h\nabla-\nabla)(V_h(\cell))\right] \perp \nabla V_h(\cell)^d,$$
	where $V_h(\cell)=\{v_{|\cell}\,:\,v \in V_h\,,\; \cell \in \mesh\}$ and the orthogonality is understood in $L^2(K;\R^{d \times d})$ with the inner product
	induced by ``$:$''.
\end{itemize}
Practical constructions of $(V_h,Q_h,I_h,\stab_h)$ satisfying the above estimates are described in \cite{HDM_linear}.

\begin{definition}[HD using GR]\label{def:HDM:GR}
	The HD based on a quadruplet $(V_h,Q_h,I_h,\stab_h)$ satisfying \propty{0}--\propty{5}
	is defined by: $X_{\disc,0}=V_h$ and, for $u\in X_{\disc,0}$,
	\[
	\Pi_\disc u=u\,,\;\nabla_\disc u = Q_h\nabla u\mbox{ and }\hd u=\nabla (Q_h \nabla u)+\stab_h\otimes (Q_h\nabla u-\nabla u).
	\]
\end{definition}
\section{Properties of HDM}\label{sec.properties}
This section describes the four properties associated with an HD and verifies these properties for several numerical methods so that the method fits in the HDM framework. The convergence analysis of a HS is based on four quantities and associated notions, measuring the stability and accuracy of the chosen HD.

The first quantity is a constant, $C_\disc$, that ensures discrete Poincar\'e inequalities. It is defined by 
\be\label{def.CD}
C_\disc = \max_{w\in X_{\disc,0}\setminus\{0\}} \left(\frac{\norm{\Pi_\disc w}{}}{\norm{\hd w}{}},
\frac{\norm{\nabla_\disc w}{L^4}}{\norm{\hd w}{}}\right).
\ee

The second quantity is the interpolation error $S_\disc$ defined by: for all $\varphi\in H^2_0(\O)$,
\be\label{def.SD}
\begin{aligned}
	&S_\disc(\varphi)=\min_{w\in X_{\disc,0}}\Big(\norm{\Pi_\disc w-\varphi}{}
	+\norm{\nabla_\disc w-\nabla\varphi}{L^4}	+\norm{\hd w-\hessian \varphi}{}\Big).
\end{aligned}
\ee
To define the limit-conformity measure for the Hessian scheme, introduce  $\wdspace(\O)=\{\xi\in L^2(\O;\R^{d \times d})\,;\,\hessian:\xi \in L^2(\O) \}$ and $ H_{\rm{div}}({\O})=\{\phi \in L^2(\O;\R^d):\, \mbox{div}\phi \in L^2(\O)\}$. For all $\xi \in \wdspace(\O)$ and $\phi \in H_{\rm{div}}({\O})$, set
\be\label{def.WD}
\begin{aligned}
	&W_\disc(\xi)=\max_{w\in X_{\disc,0}\backslash\{0\}}
	\frac{1}{\norm{\hd w}{}}\Big|\int_\O \Big((\hessian:\xi)\Pi_\disc w - \xi:\hd w \Big)\d\x \Big|, 
\end{aligned}
\ee
\be\label{def.WD_gradient}
\begin{aligned}
	& \hat{W}_\disc(\phi)=\max_{w\in X_{\disc,0}\backslash\{0\}}
	\frac{1}{\norm{\hd w}{}}\Big|\int_\O \Big(\nabla_\disc w \cdot \phi + \Pi_\disc w \mbox{ div} \phi \Big)\d\x \Big|. \end{aligned}
\ee
Here $W_\disc$ measures the defect of a double integration by parts \cite{HDM_linear} and is  the limit-conformity measure between the reconstructed Hessian and reconstructed function. $\hat{W}_\disc$ measures the defect of a Stokes formula between the reconstructed gradient and function. 

\begin{definition}[Coercivity, consistency, limit-conformity and compactness]\label{def:coercive}
	Let $(\disc_m)_{m \in \N}$ be a sequence of HDs in the sense of Definition \ref{HD}. We say that
	\begin{enumerate}
		\item $(\disc_m)_{m\in\N}$ is \emph{coercive} if there exists $C_P \in \R^+$ such that $C_{\disc_{m}} \leq C_P$ for all $m \in \N$.
		\item $(\disc_m)_{m\in\N}$ is \emph{consistent}, if 
		\be\label{def:cons}
		\forall \varphi\in H^2_0(\O)\,,
		\lim_{m \rightarrow \infty} S_{\disc_m}(\varphi)=0.
		\ee
		\item $(\disc_m)_{m\in\N}$ is \emph{limit-conforming}, if 
		\be\label{def:lc}
			\forall \: \xi \in H(\O), \, \forall \phi \in  H_{\rm{div}}({\O}),\quad
			\lim_{m \rightarrow \infty} \big(W_{\disc_m}(\xi)+\hat{W}_{\disc_m}(\phi)\big)=0.
		\ee
		\item $(\disc_m)_{m \in \N}$ is \emph{compact} if for any sequence $(u_m)_{m \in \N}$ such that $u_m \in X_{\disc_m,0}$ and $(\norm{\hessian_{\disc_m} u_m}{})_{m \in \N}$ is bounded, the sequence $(\Pi_{\disc_m}u_m)_{m \in \N}$ is relatively compact in $L^2(\O)$, and the sequence $(\nabla_{\disc_m}u_m)_{m \in \N}$ is relatively compact in $L^4(\O;\R^d)$.
	\end{enumerate}
\end{definition}

\begin{remark}\label{remark.dense}
	 As for the (second order) gradient discretisation method, see \cite[Lemmas 2.16 and 2.17]{gdm}, it can be easily proved that, for coercive sequences of HDs, the consistency and
	 limit-conformity properties only need to be tested for functions in dense subsets of $H^2_0(\O)$, and $H(\O)$ and $H_\div(\O)$, respectively.
\end{remark}
\begin{remark}[Comparison with the linear setting]\label{remark.linear}
	For linear equations, $C_\disc$ and $S_\disc$ are defined using the $L^2$-norms of the gradients. Dealing with the trilinear non-linearity requires higher integrability properties, and thus the usage the $L^4$-norms of gradients in the definitions \eqref{def.CD} and \eqref{def.SD} of $C_\disc$ and $S_\disc$, respectively. 
	
	Another difference with the linear setting is the introduction of $\hat{W}_\disc$ here. The limit-conformity defect $W_{\disc}$ is sufficient to analyse the convergence of the HDM for linear models. Here, however, the non-linear model \eqref{abstract_weak} involves the gradient, and accounting for $\hat{W}_\disc$ in the definition of limit-conformity is necessary to identify the limit of the reconstructed gradients in the convergence analysis.
	
\end{remark}
\begin{remark}
	In most cases, by the continuous Sobolev embedding (which is often also valid at the discrete level \cite[Appendix B]{gdm}) we actually expect $(\Pi_{\disc_m}u_m)_{m \in \N}$ and $(\nabla_{\disc_m}u_m)_{m \in \N}$ to be compact in $L^p$ for all $p < 2^*$, where $2^*$ is a Sobolev exponent associated with $2$.
\end{remark}
\subsection{Properties of numerical methods in HDM framework}
A few methods that fits in the HDM framework is considered in Section \ref{sec:class.in.HDM}. Here we will state the properties, namely coercivity, consistency, limit-conformity and compactness, in Definition \ref{def:coercive} in the context of these numerical methods. The proofs associated with the ncFEMs and GR methods are provided in the Appendix.
\subsubsection{\bf Conforming FEM}
The estimates on $C_\disc$,
$S_\disc$, $W_\disc$, $\hat{W}_\disc$ and the compactness property easily follow:
\begin{itemize}
	\item $C_\disc$ is bounded by the maximum of the constants of the continuous Poincar\'e inequality in $H^2_0(\O)$ and the continuous Sobolev imbedding $H^1(\O)\hookrightarrow L^4(\O)$.
	\item  Standard interpolation properties (see, e.g., \cite{ciarlet1978finite}) and the continuous Sobolev imbedding $H^1(\O)\hookrightarrow L^4(\O)$ yield an $\mathcal O(h)$ estimate on $S_\disc(\varphi)$, provided $\varphi\in H^3(\Omega)\cap H^2_0(\Omega)$. This and Remark \ref{remark.dense} imply that $\lim_{h\to 0}S_\disc(\varphi)\to 0$ for all $\varphi\in H^2_0(\Omega)$. 
	\item Integration-by-parts in $H^2_0(\O)$ shows that $W_\disc(\xi)=0$ for all $\xi \in \wdspace(\O)$ and $\hat{W}_\disc(\phi)=0$ for all $\phi \in H_{\rm{div}}({\O})$.
	\item The compactness of $(\disc_m)_{m \in \N}$ follows from the Rellich and Sobolev imbedding theorems.
\end{itemize}

\subsubsection{\bf Non-conforming FEM}\label{sec.ncfem.properties}
Theorem \ref{th.MorleyAdini} provides estimates on the four quantities associated with HD and shows that, along sequences of refined meshes, the HDs corresponding to the Morley and Adini finite elements satisfy the coercivity, consistency, limit-conformity and compactness properties. The proof is provided in the Appendix. These properties are essential to apply Theorem \ref{convergence_abstract_non-linear}. 

\begin{theorem}\label{th.MorleyAdini}
	Let $\disc$ be a HD for the Morley (resp.\ Adini) ncFEM in the sense of Definition \ref{def.HD.Morley} (resp. Definition \ref{def.HD.Adini}). Then the following hold:
	\begin{itemize}
		\item[(i)] (Coercivity) $C_\disc {\lesssim 1}$,
		\item[(ii)] (Consistency) $ \forall\: \varphi \in H^{3}(\O)\cap H^2_0(\O)$, $S_{\disc}(\varphi) \lesssim h\norm{\varphi}{H^{3}(\O)}$,
		\item[(iii)] (Limit-conformity) $\forall \:\xi \in H^2(\O;\R^{2 \times 2}), \forall \phi \in H^1(\O;\R^2),$ 
		$$
		W_{\disc}(\xi)+\hat{W}_{\disc}(\phi)\lesssim h\big( \norm{\xi}{H^2(\O;\R^{2 \times 2})} + \norm{\phi}{H^1(\O)}\big),
		$$
		\item[(iv)] (Compactness) For a sequence of meshes $(\mesh_{h_m})_{m \in \N}$ with $h_m \rightarrow 0$, denoting the HD constructed on $\mesh_{h_m}$ by $\disc_m$, the sequence $(\disc_m)_{m \in \N}$ is compact.
	\end{itemize}
\end{theorem}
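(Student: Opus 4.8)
The plan is to treat the Morley and Adini elements together, using that in both cases $\Pi_\disc w$ is a broken polynomial with $\nabla_\disc w=\nabla_\mesh\Pi_\disc w$ and $\hd w=\hessian_\mesh\Pi_\disc w$, and that the spaces enjoy weak continuity across edges: value-continuity at vertices (zero at boundary vertices) and zero-mean jumps of the normal derivative for Morley, and global $C^0\cap H^1_0$-conformity for Adini. For the coercivity (i), I would first establish a discrete Poincar\'e--Sobolev inequality $\norm{\Pi_\disc w}{}+\norm{\nabla_\mesh\Pi_\disc w}{L^4}\lesssim\norm{\hessian_\mesh\Pi_\disc w}{}$, uniformly in $h$. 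This rests on a broken Poincar\'e--Friedrichs inequality for piecewise-$H^2$ functions obeying the weak continuity conditions, together with the two-dimensional discrete Sobolev embedding of the broken $H^2$ space into broken $W^{1,4}$; these are standard for Morley/Adini, and since $\norm{\hd\,\cdot}{}$ is already a norm by Definition \ref{HD}, only the scale-invariant bound is at stake, which yields $C_\disc\lesssim1$.

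For the consistency (ii), I would take $w=I_\disc\varphi$, the canonical finite-element interpolant. Local Bramble--Hilbert estimates give $O(h)$ control of $\norm{\hessian_\mesh\Pi_\disc w-\hessian\varphi}{}$ and (higher-order) control of the function term. The only non-routine point is the $L^4$ gradient norm: since in dimension two $H^3(\O)\hookrightarrow W^{2,4}(\O)$, I apply the elementwise estimate $\norm{\nabla(\Pi_\disc I_\disc\varphi-\varphi)}{L^4(\cell)}\lesssim h_\cell\norm{\varphi}{W^{2,4}(\cell)}$ and sum the fourth powers to obtain $\norm{\nabla_\disc w-\nabla\varphi}{L^4}\lesssim h\norm{\varphi}{W^{2,4}(\O)}\lesssim h\norm{\varphi}{H^3(\O)}$. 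Combining the three contributions gives $S_\disc(\varphi)\lesssim h\norm{\varphi}{H^3(\O)}$, and Remark \ref{remark.dense} upgrades this to consistency on all of $H^2_0(\O)$.

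Part (iii) is the core and the main obstacle. For $W_\disc(\xi)$ I would perform a double integration by parts element by element, rewriting $\int_\O(\hessian:\xi)\Pi_\disc w\d\x-\int_\O\xi:\hessian_\mesh\Pi_\disc w\d\x$ as a sum over edges of boundary terms pairing the normal traces of $\xi$ and $\nabla\xi$ with the jumps $\llbracket\Pi_\disc w\rrbracket$ and $\llbracket\nabla\Pi_\disc w\rrbracket$. The factor $h$ is extracted from the weak continuity: $\llbracket\nabla\Pi_\disc w\rrbracket$ has zero edge-average (its normal part by the defining Morley condition, its tangential part because $\Pi_\disc w$ is vertex-continuous), so one may subtract edge-averages of the smooth data before estimating, while $\llbracket\Pi_\disc w\rrbracket$ vanishes at the edge endpoints; closing with edge trace inequalities and the approximation bound $\norm{\xi-\bar\xi}{L^2(\sigma)}\lesssim h_\sigma^{1/2}\norm{\xi}{H^1(\cell)}$ against $\norm{\hessian_\mesh\Pi_\disc w}{}$ gives $W_\disc(\xi)\lesssim h\norm{\xi}{H^2(\O;\R^{2\times2})}$. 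For $\hat W_\disc(\phi)$, a single Stokes formula per element reduces $\int_\O(\nabla_\disc w\cdot\phi+\Pi_\disc w\,\div\phi)\d\x$ to $\sum_{\sigma}\int_\sigma\llbracket\Pi_\disc w\rrbracket(\phi\cdot n)\d s$, the volume remainder vanishing because $\nabla_\disc=\nabla_\mesh\Pi_\disc$; this is identically zero for Adini since its functions lie in $H^1_0(\O)$, whereas for Morley the vertex-continuity of $\Pi_\disc w$ again delivers the $O(h)$ bound against $\norm{\phi}{H^1(\O)}\norm{\hd w}{}$. The delicate sign and index bookkeeping in the elementwise integrations by parts, and the correct matching of each jump to the weak-continuity property that kills it, is where I expect the real work to lie. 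By Remark \ref{remark.dense} it suffices to test on $\xi\in H^2$ and $\phi\in H^1$, which is exactly the stated range.

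Finally, for the compactness (iv), I would invoke the discrete Rellich theorem for these HDs, already available from the linear analysis \cite{HDM_linear,DS_HDM}: a bound on $\norm{\hessian_{\disc_m}u_m}{}$ yields, via translation (Kolmogorov) estimates controlled by the broken Hessian, relative compactness of $(\Pi_{\disc_m}u_m)_{m\in\N}$ and $(\nabla_{\disc_m}u_m)_{m\in\N}$ in $L^2$. The one genuinely new point is $L^4$-compactness of the gradient: the discrete Sobolev embedding from (i) bounds $\nabla_{\disc_m}u_m$ uniformly in some $L^p(\O;\R^2)$ with $p>4$, and interpolating $\norm{\cdot}{L^4}\le\norm{\cdot}{L^2}^{\theta}\norm{\cdot}{L^p}^{1-\theta}$ promotes the $L^2$-convergence of a subsequence to convergence in $L^4$, which is the required compactness.
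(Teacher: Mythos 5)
Your proposal is correct and follows essentially the same strategy as the paper: weak inter-element continuity (vanishing jumps at vertices/midpoints, or $C^0$-conformity for Adini) feeding into discrete Sobolev/Poincar\'e inequalities for (i), the canonical interpolant for (ii), elementwise integration by parts plus jump cancellation for (iii), and a discrete Rellich argument for (iv). The differences are matters of execution rather than route. For (iii), the paper simply cites the $W_\disc$ bound from the earlier linear-HDM works (Lemma \ref{lemma.ncFEM}) and, for the Morley $\hat W_\disc$ bound, subtracts the continuous piecewise-linear interpolant $\Pi_1 v_h$ and scales to the reference element; your version --- exploiting that $\llbracket\Pi_\disc w\rrbracket$ vanishes at edge endpoints and that $\llbracket\nabla\Pi_\disc w\rrbracket$ has zero edge average (normal part by the Morley condition, tangential part by vertex continuity) --- is the same mechanism phrased through edge Poincar\'e/trace inequalities, and is in fact the argument behind the cited lemma. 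For (iv), the paper applies the discrete Rellich theorem (Lemma \ref{lemma.dG}$(ii)$) directly in $L^q$ for any $q<2^*$, which in dimension two immediately yields $L^4$-compactness of the reconstructed gradients; your two-step route ($L^2$-compactness plus a uniform $L^p$ bound, $p>4$, plus interpolation of $L^4$ between $L^2$ and $L^p$) also works, but note that the $L^p$ bound with $p>4$ is not literally what your part (i) provides (that gives exactly $L^4$); you need the two-dimensional discrete Sobolev embedding for some exponent strictly above $4$, which is standard for broken polynomial spaces but should be invoked explicitly. With that small repair, both approaches deliver the same conclusions; the paper's is marginally more economical because the single discrete Rellich statement dispenses with the interpolation step.
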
	
As a consequence, following Remark \ref{remark.dense}, if $(\mesh_{m})_{m \in \N}$ is a regular family of meshes and $\disc_m$ is the HD for the Adini and Morley ncFEMs on $\mesh_{m}$, then $(\disc_m)_{m \in \N}$ is coercive, consistent, limit-conforming and compact.
\subsubsection{\bf Method based on GR Operators}\label{sec.grmethod.properties}
Recall the properties $\propty{0}-\propty{5}$ method based on GR operators associated with Section \ref{sec.grmethod}. 

\begin{theorem}[Estimates for HDs based on GR]\label{thm:HD.GR}
	Let $\disc$ be a HD in the sense of Definition \ref{def:HDM:GR} such that $(V_h,I_h,Q_h,\stab_h)$ satisfy \propty{0}--\propty{5}.
	Then,
	\begin{itemize}
		\item[(i)]  (Coercivity) $C_\disc \lesssim 1$, 
		\item[(ii)](Consistency) $ \forall\: \varphi \in W$, $S_{\disc}(\varphi) \lesssim  h\norm{\varphi}{W}$,
		\item[(iii)] (Limit-conformity) $\forall \:\xi \in H^2(\O;\R^{d \times d})$, $W_{\disc}(\xi) \lesssim h \norm{\xi}{H^2(\O;\R^{d \times d})}$ and $\forall \phi \in H_{\rm{div}}(\O)$, $\hat{W}_{\disc}(\phi)=0,$
		\item[(iv)] (Compactness) If $(\mesh_m)_{m\in\N}$ is a sequence of meshes and $\disc_m$ is a GR HD based on $\mesh_m$ for discrete elements satisfying \propty{0}--\propty{5} uniformly with respect to $m$, then $(\disc_m)_{m\in\N}$ is compact.
	\end{itemize}
\end{theorem}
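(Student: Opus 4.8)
The plan is to verify the four items in turn, the common engine being the cell-wise orthogonality encoded in \propty{5}. First I would record the resulting Pythagorean identity: for $w\in X_{\disc,0}=V_h$, on each cell $K$ the consistent part $\nabla(Q_h\nabla w)|_K\in\nabla V_h(K)^d$ is, by \propty{5}, $L^2(K)$-orthogonal to the stabilisation part $\stab_h|_K\otimes(Q_h\nabla w-\nabla w)|_K$, so summing over $K$ gives $\norm{\hd w}{}^2=\norm{\nabla(Q_h\nabla w)}{}^2+\norm{\stab_h\otimes(Q_h\nabla w-\nabla w)}{}^2$. Since $1\le|\stab_h|$, this yields the two one-sided bounds $\norm{\nabla(Q_h\nabla w)}{}\le\norm{\hd w}{}$ and $\norm{Q_h\nabla w-\nabla w}{}\le\norm{\hd w}{}$, which drive everything. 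For coercivity (i), each component $Q_h\nabla w\in H^1_0(\O)$, so the continuous Poincar\'e inequality gives $\norm{Q_h\nabla w}{}\lesssim\norm{\nabla(Q_h\nabla w)}{}\le\norm{\hd w}{}$; then $\norm{\nabla w}{}\le\norm{Q_h\nabla w}{}+\norm{Q_h\nabla w-\nabla w}{}\lesssim\norm{\hd w}{}$ and Poincar\'e again give $\norm{\Pi_\disc w}{}=\norm{w}{}\lesssim\norm{\hd w}{}$, while the Sobolev embedding $H^1\hookrightarrow L^4$ applied to $Q_h\nabla w$ gives $\norm{\nabla_\disc w}{L^4}\lesssim\norm{Q_h\nabla w}{}+\norm{\nabla(Q_h\nabla w)}{}\lesssim\norm{\hd w}{}$. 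Hence $C_\disc\lesssim1$.

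For consistency (ii) I would take $w=I_h\varphi$ in $S_\disc(\varphi)$. The function term $\norm{I_h\varphi-\varphi}{}$ follows from Poincar\'e together with \propty{0}, and the $L^4$-gradient term is controlled by bounding the $L^4$-norm of $Q_h\nabla I_h\varphi-\nabla\varphi$ by its $H^1$-norm, whose $L^2$-part is $O(h^2)$ by \propty{2} and whose gradient-part is the Hessian defect treated next. The delicate term is $\norm{\nabla(Q_h\nabla I_h\varphi)-\hessian\varphi}{}$. Here I would insert a standard quasi-interpolant $\mathcal I_h\nabla\varphi\in V_h^d$ of the smooth field $\nabla\varphi$ and write $\nabla(Q_h\nabla I_h\varphi)-\hessian\varphi=\nabla(Q_h\nabla I_h\varphi-\mathcal I_h\nabla\varphi)+\nabla(\mathcal I_h\nabla\varphi-\nabla\varphi)$; the second summand is $O(h)$ by interpolation, and for the first the inverse inequality in \propty{0} gives $\norm{\nabla(Q_h\nabla I_h\varphi-\mathcal I_h\nabla\varphi)}{}\lesssim h^{-1}\norm{Q_h\nabla I_h\varphi-\mathcal I_h\nabla\varphi}{}$, where the $L^2$-distance inside is $O(h^2)$ because both $Q_h\nabla I_h\varphi$ (via \propty{2}) and $\mathcal I_h\nabla\varphi$ (via interpolation) are $O(h^2)$-close to $\nabla\varphi$. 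The factor $h^{-1}$ is thereby absorbed, giving $S_\disc(\varphi)\lesssim h\norm{\varphi}{W}$.

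For limit-conformity (iii), I would attack $W_\disc$ by a double integration by parts: since $w\in H^1_0(\O)$ and each $Q_h\nabla w\in H^1_0(\O)$ all boundary terms vanish, and the numerator reduces (the symmetric part being all that $\hessian\colon\xi$ sees) to $\int_\O(\div\xi-\xi^{t}\stab_h)\cdot(Q_h\nabla w-\nabla w)\d\x$. I split this into the $\div\xi$-piece and the stabilisation piece $-\int_\O\xi:[\stab_h\otimes(Q_h\nabla w-\nabla w)]\d\x$. For the first, $Q_h\nabla w-\nabla w\in N_h^\bot$, so I subtract any $\mu_h\in N_h$ and invoke \propty{4}, $\inf_{\mu_h\in N_h}\norm{\div\xi-\mu_h}{}\lesssim h\norm{\div\xi}{H^1}\lesssim h\norm{\xi}{H^2}$, pairing against a quantity $\le\norm{\hd w}{}$. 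For the second, on each $K$ the bracket is orthogonal by \propty{5} to the constant matrix $\overline\xi_K\in\nabla V_h(K)^d$, so I replace $\xi$ by $\xi-\overline\xi_K$ and use $\norm{\xi-\overline\xi_K}{L^2(K)}\lesssim h\norm{\xi}{H^1(K)}$; this gives $W_\disc(\xi)\lesssim h\norm{\xi}{H^2}$. For $\hat W_\disc$, the identity $\int_\O\Pi_\disc w\,\div\phi\,\d\x=-\int_\O\nabla w\cdot\phi\,\d\x$ reduces the numerator to $\int_\O(Q_h\nabla w-\nabla w)\cdot\phi\,\d\x$; while \propty{4} alone only gives an $O(h)$ bound, I expect the exact vanishing $\hat W_\disc(\phi)=0$ to come from the biorthogonal construction being arranged so that $(\nabla_\disc,\Pi_\disc)=(Q_h\nabla,\mathrm{Id})$ satisfies an exact discrete Green identity.

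Finally, for compactness (iv): if $\norm{\hessian_{\disc_m}u_m}{}$ is bounded, the coercivity estimates above (uniform in $m$ by hypothesis) bound $(u_m)$ in $H^1_0(\O)$ and $(Q_{h_m}\nabla u_m)$ in $H^1(\O)^d$, so Rellich's theorem gives relative compactness of $\Pi_{\disc_m}u_m=u_m$ in $L^2(\O)$, and the compact embedding $H^1\hookrightarrow\hookrightarrow L^4$ (for $d\le3$, in particular $d=2$) gives relative compactness of $\nabla_{\disc_m}u_m=Q_{h_m}\nabla u_m$ in $L^4(\O;\R^d)$. The main obstacle I anticipate is the Hessian consistency term in (ii): the $h^{-1}$ produced by the inverse inequality must be absorbed by the superconvergence \propty{2} of $Q_h\nabla I_h$, which is precisely why \propty{2} carries the gain $h^2$; a secondary subtlety is justifying the exact (rather than merely $O(h)$) vanishing of $\hat W_\disc$.
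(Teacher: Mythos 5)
Your proposal is correct in its overall architecture and is in fact more self-contained than the paper's own proof: the paper delegates all the key estimates to Lemma \ref{lemma.GR}, quoted from \cite[Theorem 4.3]{HDM_linear} (the bound $\norm{\nabla(Q_h\nabla w)}{}+\norm{Q_h\nabla w-\nabla w}{}\le\sqrt{2}\norm{\hd w}{}$, the interpolation estimates for $I_h$, and the bound on $W_\disc$), and then only adds the Sobolev-embedding arguments for the $L^4$ quantities and the Rellich argument for compactness. You re-derive those quoted estimates directly from \propty{0}--\propty{5}: the cell-wise orthogonality in \propty{5} giving the Pythagorean identity (this is exactly the mechanism behind Lemma \ref{lemma.GR}$(i)$), the quasi-interpolant plus inverse-inequality superconvergence argument for the Hessian consistency, and the splitting of $W_\disc$ into a $\div\,\xi$ part handled by \propty{4} and a stabilisation part handled by \propty{5} with cell averages. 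Two caveats on your part (ii) and (iii) arguments: (a) the Hessian defect in $S_\disc(\varphi)$ also contains the stabilisation term $\stab_h\otimes(Q_h\nabla I_h\varphi-\nabla I_h\varphi)$, which you never bound explicitly --- it is $O(h)$ by the triangle inequality, \propty{0} and \propty{2}, so this is an omission rather than a gap; (b) your mean-value trick for the stabilisation part of $W_\disc$ requires constant matrices to lie in $\nabla V_h(\cell)^d$, i.e.\ $\Poly_1(\cell)\subset V_h(\cell)$, which holds for the intended piecewise-linear construction but is not literally one of \propty{0}--\propty{5}.

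The one real divergence is the claim $\hat{W}_\disc\equiv 0$ in (iii). The paper's proof is one line: since $\Pi_\disc v\in H^1_0(\O)$, an integration by parts gives the result. Your computation --- reducing the numerator, via that same integration by parts, to $\int_\O(Q_h\nabla w-\nabla w)\cdot\phi\d\x$ --- is the correct one for Definition \ref{def:HDM:GR}, where $\nabla_\disc w=Q_h\nabla w$ and not $\nabla w$; the paper's one-liner silently identifies the two. As you suspect, this residual term cannot vanish for every $\phi\in H_{\rm div}(\O)$ (testing against all smooth $\phi$ would force $Q_h\nabla w=\nabla w$ for all $w\in V_h$, which fails for the biorthogonal projector), so your instinct not to trust an ``exact discrete Green identity'' is sound, and you should not expect to prove ``$=0$''. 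Your fallback estimate $\hat{W}_\disc(\phi)\lesssim h\norm{\phi}{H^1(\O)}$, obtained by subtracting $\mu_h\in N_h$ and invoking \propty{4}, is weaker than the literal statement of Theorem \ref{thm:HD.GR}(iii) but, combined with Remark \ref{remark.dense}, it yields the limit-conformity property of Definition \ref{def:coercive}, which is all that is used in Theorem \ref{convergence_abstract_non-linear}. Items (i) and (iv) of your proposal match the paper's arguments essentially verbatim.
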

As a consequence, following Remark \ref{remark.dense}, if $(\mesh_{m})_{m \in \N}$ is a regular family of meshes and $\disc_m$ is the HD for the GR method on $\mesh_{m}$, then $(\disc_m)_{m \in \N}$ is coercive, consistent, limit-conforming and compact.
\section{Convergence analysis} \label{con.reg}
We establish the convergence of the HS, provided the underlying sequences of HDs satisfy the properties in Definition \ref{def:coercive}. This convergence is proved without any extra regularity assumption on the exact solution, or the assumption that the linearized problem around this solution is well-posed.
Let us start with a preliminary lemma.

\begin{lemma}[Regularity of the limit] \label{regularity_nonlinear}
	Let $(\disc_m)_{m \in \N}$ be a  coercive and limit-conforming sequen\-ce of HDs in the sense of Definition \ref{def:coercive}$(i)$ and $(iii)$. Let $u_m\in X_{\disc_m,0}$ be such that $\norm{u_m}{\disc_m}$ remains bounded. Then, there exists a subsequence of $(\disc_m,u_m)_{m \in \N}$ (denoted using the same notation)  and $u \in H^2_0(\O)$ such that $\Pi_{\disc_m}u_m$ converges weakly to $u$ in $L^2(\O)$, $\nabla_{\disc_m}u_m$ converges weakly to $\nabla u$ in $L^4(\O;\R^d)$, and $\hessian_{\disc_m}u_m$ converges weakly to $\hessian u$ in $L^2(\O; \R^{d \times d})$.
\end{lemma}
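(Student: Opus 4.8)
The plan is to combine coercivity (to get a priori bounds and hence weak compactness) with the two limit-conformity defects $W_\disc$ and $\hat{W}_\disc$ (to identify the weak limits as $\nabla u$ and $\hessian u$ and to recover the homogeneous boundary conditions). First I set $M:=\sup_m\norm{u_m}{\disc_m}=\sup_m\norm{\hessian_{\disc_m}u_m}{}<\infty$. From the definition of $C_{\disc_m}$ in \eqref{def.CD} and the uniform bound $C_{\disc_m}\le C_P$, I immediately get $\norm{\Pi_{\disc_m}u_m}{}\le C_P M$ and $\norm{\nabla_{\disc_m}u_m}{L^4}\le C_P M$, so the three sequences are bounded in $L^2(\O)$, $L^4(\O;\R^d)$ and $L^2(\O;\symd)$ respectively. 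By reflexivity of these spaces I extract, along a single subsequence (relabelled), weak limits $\Pi_{\disc_m}u_m\rightharpoonup u$ in $L^2$, $\nabla_{\disc_m}u_m\rightharpoonup\mathbf{v}$ in $L^4$, and $\hessian_{\disc_m}u_m\rightharpoonup\mathbf{H}$ in $L^2$, for some $u,\mathbf{v},\mathbf{H}$.

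To identify $\mathbf{v}$, I test the Stokes defect: for any $\phi\in H_\div(\O)$, \eqref{def.WD_gradient} gives $\big|\int_\O(\nabla_{\disc_m}u_m\cdot\phi+\Pi_{\disc_m}u_m\,\div\phi)\d\x\big|\le\hat{W}_{\disc_m}(\phi)\,M$. Since $\O$ is bounded, $\phi\in L^2(\O)\hookrightarrow L^{4/3}(\O)$, so the first term passes to the limit against the $L^4$-weak convergence, while $\div\phi\in L^2$ pairs with the $L^2$-weak convergence; the right-hand side tends to $0$ by limit-conformity. Hence $\int_\O(\mathbf{v}\cdot\phi+u\,\div\phi)\d\x=0$ for all $\phi\in H_\div(\O)$. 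Taking $\phi\in C_c^\infty(\O;\R^d)$ first shows $\mathbf{v}=\nabla u$ distributionally, so $u\in W^{1,4}(\O)\subset H^1(\O)$; then the Green formula for $u\in H^1$ and general $\phi\in H_\div(\O)$, together with the surjectivity of $\phi\mapsto\phi\cdot n$ onto the boundary trace space, forces $u|_{\partial\O}=0$, i.e.\ $u\in H^1_0(\O)$.

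The Hessian is handled symmetrically through \eqref{def.WD}: for $\xi\in H(\O)$, $\big|\int_\O((\hessian:\xi)\Pi_{\disc_m}u_m-\xi:\hessian_{\disc_m}u_m)\d\x\big|\le W_{\disc_m}(\xi)\,M\to0$, which in the limit yields $\int_\O((\hessian:\xi)u-\xi:\mathbf{H})\d\x=0$ for all $\xi\in H(\O)$. Restricting to $\xi\in C_c^\infty(\O;\symd)$ and integrating by parts twice identifies $\mathbf{H}=\hessian u$; together with $\mathbf{H}\in L^2$ and $u\in H^1_0(\O)$ this gives $u\in H^2(\O)$. For general $\xi\in H(\O)$, the double integration-by-parts formula carries a boundary defect involving $u|_{\partial\O}$ and $\partial_n u$; since $u|_{\partial\O}=0$, the identity reduces to $\int_{\partial\O}(\partial_n u)(n^T\xi n)\d s=0$ for all $\xi\in H(\O)$. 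Choosing $\xi=g\,(\tilde n\otimes\tilde n)$, with $\tilde n$ a smooth extension of the outward normal, makes $n^T\xi n$ an arbitrary boundary function, whence $\partial_n u=0$ and $u\in H^2_0(\O)$, completing the identification of the three weak limits.

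I expect the main obstacle to be this recovery of the boundary conditions. The interior identifications of $\mathbf{v}$ and $\mathbf{H}$ as distributional derivatives of $u$ are routine passages to the weak limit, but deducing $u=\partial_n u=0$ on $\partial\O$ relies on the trace-surjectivity facts that $\phi\cdot n$ (resp.\ $n^T\xi n$) can realise arbitrary boundary data. This is precisely where it is essential that the defects $\hat{W}_\disc$ and $W_\disc$ are tested on the full spaces $H_\div(\O)$ and $H(\O)$, rather than only on compactly supported fields.
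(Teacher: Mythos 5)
Your strategy is sound and genuinely different from the paper's. Both proofs begin identically: coercivity gives the uniform bounds, reflexivity gives weak limits $u$, $\mathbf{v}$, $\mathbf{H}$ along a subsequence, and limit-conformity lets one pass \eqref{def.WD}--\eqref{def.WD_gradient} to the limit. The divergence is in how $u\in H^2_0(\O)$ is recovered. The paper extends $\Pi_{\disc_m}u_m$, $\nabla_{\disc_m}u_m$, $\hessian_{\disc_m}u_m$ and their limits by zero outside $\O$, observes that the limit identities then hold for test fields $\xi\in H(\R^d)$ and $\phi\in H_{\rm div}(\R^d)$, and tests with $C_c^\infty(\R^d)$ fields whose supports may cross $\partial\O$; this identifies $\mathbf{v}$ and $\mathbf{H}$ as the distributional derivatives of the zero-extension of $u$ on all of $\R^d$, so $u\in H^2(\R^d)$ vanishes outside $\O$ and hence $u\in H^2_0(\O)$, with no trace theory whatsoever. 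You instead stay on $\O$ and recover the two boundary conditions separately: $u|_{\partial\O}=0$ from the surjectivity of the normal trace on $H_\div(\O)$, and $\partial_n u=0$ from a double integration by parts. Your route makes explicit where each boundary condition comes from, at the price of invoking trace theorems; the paper's extension trick is shorter and avoids them entirely (both arguments implicitly need Lipschitz regularity of $\partial\O$: yours for the Green/trace formulas, the paper's for the characterisation of $H^2_0(\O)$ via zero-extension).

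There are, however, two flaws in your Step 4 as written. First, the boundary-defect identity $\int_\O\big((\hessian:\xi)u-\xi:\hessian u\big)\d\x=-\int_{\partial\O}(\partial_n u)(n^T\xi n)\d s$ is not available ``for all $\xi\in H(\O)$'': an element of $H(\O)$ is merely $L^2$ with $L^2$ double divergence, so neither $n^T\xi n$ nor $\partial_j\xi_{ij}n_i$ has a trace on $\partial\O$. You must restrict to $\xi$ smooth up to $\overline{\O}$ (such fields lie in $H(\O)$, and they suffice). Second, your choice $\xi=g\,(\tilde n\otimes\tilde n)$ presupposes a smooth extension $\tilde n$ of the outward normal, which does not exist on the Lipschitz or polygonal domains on which these schemes are actually posed. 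Both problems vanish with the simpler choice $\xi=\varphi\,\mathrm{Id}$ with $\varphi\in C^\infty(\overline{\O})$: then $\hessian:\xi=\Delta\varphi\in L^2(\O)$, the limit identity becomes $\int_\O(u\Delta\varphi-\varphi\Delta u)\d\x=0$, and Green's second identity for $u\in H^2(\O)\cap H^1_0(\O)$ (using that the trace of $\nabla u$ equals $(\partial_n u)n$ because the trace of $u$ vanishes) reduces this to $\int_{\partial\O}\varphi\,\partial_n u\d s=0$ for all such $\varphi$, whence $\partial_n u=0$ and $u\in H^2_0(\O)$. With that repair your proof is complete.
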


\begin{proof}
The bound on $\norm{u_m}{\disc_m}$ from coercivity of $(\disc_m)_{m \in \N}$ implies that $(\Pi_{\disc_m}u_m)_m$ and $(\nabla_{\disc_m}u_m)_m$ are bounded in $L^2(\O)$ and $L^4(\O;\R^d)$, respectively. Therefore, there exists a subsequence of $(\disc_m,u_m)_{m \in \N}$ and $u \in L^2(\O), v \in L^4(\O;\R^d)$ and $w \in L^2(\O; \R^{d \times d})$ such that $\Pi_{\disc_m}u_m$ converges weakly in $L^2(\O)$ to $u$, $\nabla_{\disc_m}u_m$ converges weakly in $L^4(\O;\R^d)$ to $v$, and $\hessian_{\disc_m}u_m$ converges weakly in $L^2(\O;\R^{d \times d})$ to $w$. It remains to prove that $v=\nabla u$, $w=\hessian u$ and $u \in H^2_0(\O)$. We extend $\Pi_{\disc_m}u_m, u, \nabla_{\disc_m}u_m, v, \hessian_{\disc_m}u_m$ and $w$ by 0 outside $\O$, and the same convergence results hold, respectively, in $L^2(\R^d)$, $L^4(\R^d)^d$ and $L^2(\R^d;\R^{d \times d})$. Using the limit-conformity of $(\disc_m)_{m \in \N}$ and the bound on $\norm{u_m}{\disc_m}$, passing to the limit in \eqref{def.WD}-\eqref{def.WD_gradient} gives
	\be \label{Hess_int}
	\forall \xi \in H({\R^d}), \, \int_{\R^d}^{}\big((\hessian:\xi)u-\xi:w\big)\d\x=0
	\ee
	\be \label{Grad_int}
	\mbox{and   }	\forall  \phi \in H_{\rm{div}}({\R^d}), \, \int_{\R^d}^{}\big(v \cdot \phi + u\,\mbox{div}\phi\big)\d\x=0.
	\ee
	For $\phi \in C_c^\infty({\R^d})^{d}$ and $\xi \in C_c^\infty({\R^d};\R^{d \times d})$, \eqref{Hess_int} and \eqref{Grad_int} show that $w=\hessian u$ and $v=\nabla u$, in the sense of distributions on $\R^d$. This implies $u \in H^2(\R^d)$ and, since $u=0$ outside the domain $\O$, $u \in H^2_0(\O)$.
\end{proof}
\begin{theorem}[Convergence of the HDM] \label{convergence_abstract_non-linear}
	Let $(\disc_m)_{m \in \N}$ be a sequence of HDs, in the sense of Definition \ref{HD}, that is coercive, consistent, limit-conforming and compact in the sense of Definition \ref{def:coercive}. Then, for any $m \in \N$, there exists at least one weak solution $\Psi_{\dm}$ of \eqref{abstract_HS}, with $\disc=\disc_m$. Moreover, as $m \rightarrow \infty$, there exist a subsequence of $(\disc_m)_{m \in \N}$ (denoted using the same notation $(\disc_m)_{m \in \N}$), and a solution $\Psi$ of the abstract problem \eqref{abstract_weak} such that $\Pi_{\disc_m}\Psi_{\dm} \rightarrow\Psi$ in $\bsymb{L}^2(\O)$, $\nabla_{\disc_m}\Psi_{\dm}\rightarrow\nabla \Psi$ in $\bsymb{L}^4(\O;\R^d)$ and $\hessian_{\disc_m}\Psi_{\dm}\rightarrow\hessian \Psi$ in $\bsymb{L}^2(\O;\symd)$.
\end{theorem}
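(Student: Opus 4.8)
The plan is to prove the result in two stages: first the existence of a discrete solution $\Psi_{\dm}$ for each fixed $m$, then the convergence of a subsequence of these solutions. Both stages rest on the algebraic cancellation \assum{3}, which furnishes the fundamental \emph{a priori} estimate. Testing \eqref{abstract_HS} with $\Phi_\disc=\Psi_\disc$ and using $\mathcal{B}(\hd\Psi_\disc,\nabla_\disc\Psi_\disc,\nabla_\disc\Psi_\disc)=0$ leaves $\mathcal{A}(\hd\Psi_\disc,\hd\Psi_\disc)=\mathcal{L}(\Pi_\disc\Psi_\disc)$; the coercivity of $\mathcal{A}$ in \assum{1} (with constant $\alpha$), the continuity of $\mathcal{L}$ in \assum{4} and the Poincar\'e constant \eqref{def.CD} then yield $\norm{\Psi_\disc}{\disc}\lesssim C_\disc$.

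For existence (stage one), I would work on the finite-dimensional space $\bfXd$ equipped with the inner product associated with $\norm{\cdot}{\disc}$, and define a continuous map $P:\bfXd\to\bfXd$ by $\SCAL{P(\Psi_\disc)}{\Phi_\disc}=\mathcal{A}(\hd\Psi_\disc,\hd\Phi_\disc)+\mathcal{B}(\hd\Psi_\disc,\nabla_\disc\Psi_\disc,\nabla_\disc\Phi_\disc)-\mathcal{L}(\Pi_\disc\Phi_\disc)$ for all $\Phi_\disc$, so that zeros of $P$ are exactly the solutions of \eqref{abstract_HS}. The computation above shows $\SCAL{P(\Psi_\disc)}{\Psi_\disc}\ge \alpha\norm{\Psi_\disc}{\disc}^2-\norm{\mathcal{L}}{}C_\disc\norm{\Psi_\disc}{\disc}$, which is strictly positive on a sphere $\norm{\Psi_\disc}{\disc}=R$ of large enough radius; the usual Brouwer-degree consequence then produces a zero of $P$ inside that ball. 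Here \assum{3} is essential, as it is what makes the nonlinear term invisible to this coercivity estimate.

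For convergence (stage two), the coercivity of the sequence gives $C_{\disc_m}\le C_P$, hence a uniform bound on $\norm{\Psi_{\dm}}{\disc_m}$. The compactness property then provides a subsequence along which $\Pi_{\disc_m}\Psi_{\dm}\to\Psi$ strongly in $\bsymb{L}^2(\O)$ and $\nabla_{\disc_m}\Psi_{\dm}\to G$ strongly in $\bsymb{L}^4(\O;\R^d)$, while Lemma \ref{regularity_nonlinear} applied component-wise identifies the limits: $\Psi\in\bfX$, $G=\nabla\Psi$, and $\hessian_{\disc_m}\Psi_{\dm}\rightharpoonup\hessian\Psi$ weakly in $\bsymb{L}^2(\O;\symd)$. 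To show $\Psi$ solves \eqref{abstract_weak}, fix $\Phi\in\bfX$ and use consistency \eqref{def.SD} to pick $\Phi_{\disc_m}$ with $\Pi_{\disc_m}\Phi_{\disc_m}\to\Phi$, $\nabla_{\disc_m}\Phi_{\disc_m}\to\nabla\Phi$ and $\hessian_{\disc_m}\Phi_{\disc_m}\to\hessian\Phi$ strongly, and then pass to the limit in \eqref{abstract_HS} tested against $\Phi_{\disc_m}$.

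The passage to the limit in the trilinear term is the main obstacle, and the place where the \emph{strong} $\bsymb{L}^4$ convergence of the reconstructed gradients (not merely their weak convergence) is indispensable: writing $\mathcal{B}(\hessian_{\disc_m}\Psi_{\dm},\nabla_{\disc_m}\Psi_{\dm},\nabla_{\disc_m}\Phi_{\disc_m})$ as a telescoping difference, the continuity of $\mathcal{B}$ on $\bsymb{L}^2\times\bsymb{L}^4\times\bsymb{L}^4$ together with the boundedness of $\hessian_{\disc_m}\Psi_{\dm}$ in $\bsymb{L}^2$ controls the terms involving the strong gradient limits, while the remaining term $\mathcal{B}(\hessian_{\disc_m}\Psi_{\dm}-\hessian\Psi,\nabla\Psi,\nabla\Phi)$ tends to $0$ because $\Xi\mapsto\mathcal{B}(\Xi,\nabla\Psi,\nabla\Phi)$ is a continuous linear form against which the weak $\bsymb{L}^2$ convergence of the Hessians applies. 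The $\mathcal{A}$-term passes to the limit by the weak-strong pairing and $\mathcal{L}$ by strong $\bsymb{L}^2$ convergence, giving \eqref{abstract_weak}. Finally, to upgrade the Hessian convergence to strong, I would combine the discrete energy identity $\mathcal{A}(\hessian_{\disc_m}\Psi_{\dm},\hessian_{\disc_m}\Psi_{\dm})=\mathcal{L}(\Pi_{\disc_m}\Psi_{\dm})$ with its continuous counterpart $\mathcal{A}(\hessian\Psi,\hessian\Psi)=\mathcal{L}(\Psi)$ (both again using \assum{3}) to obtain $\mathcal{A}(\hessian_{\disc_m}\Psi_{\dm},\hessian_{\disc_m}\Psi_{\dm})\to\mathcal{A}(\hessian\Psi,\hessian\Psi)$; expanding $\mathcal{A}(\hessian_{\disc_m}\Psi_{\dm}-\hessian\Psi,\hessian_{\disc_m}\Psi_{\dm}-\hessian\Psi)$ and invoking the weak convergence forces this quantity to $0$, whence coercivity of $\mathcal{A}$ yields the strong convergence $\hessian_{\disc_m}\Psi_{\dm}\to\hessian\Psi$ in $\bsymb{L}^2(\O;\symd)$.
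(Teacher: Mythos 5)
Your proposal is correct, and its convergence stages coincide with the paper's own Steps 2--4: the same a priori bound obtained by testing with $\Psi_\disc$ and invoking \assum{3} plus coercivity of $\mathcal{A}$, the same combination of Lemma \ref{regularity_nonlinear} (for identification of the limits in $H^2_0$) with the compactness property (for strong $\bsymb{L}^2$ and $\bsymb{L}^4$ convergence), the same consistency-based interpolants of the test function (the paper's $P_{\dm}\Phi$), the same three-term telescoping of the trilinear form in which two terms are killed by continuity of $\mathcal{B}$ together with the strong $\bsymb{L}^4$ gradient convergence and the third by viewing $\Xi\mapsto\mathcal{B}(\Xi,\nabla\Psi,\nabla\Phi)$ as a continuous linear functional against the weakly convergent Hessians, and the same energy-identity argument $\mathcal{A}(\hd\Psi_{\dm},\hd\Psi_{\dm})=\mathcal{L}(\Pi_{\dm}\Psi_{\dm})\to\mathcal{L}(\Psi)=\mathcal{A}(\hessian\Psi,\hessian\Psi)$ for upgrading the Hessian convergence to strong. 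The one genuine divergence is the discrete existence step. The paper freezes the Hessian slot of $\mathcal{B}$ at $\overline{\Psi}_\disc$, notes that \assum{3} makes the resulting bilinear form $\mathcal{A}_{\overline{\Psi}_\disc}$ coercive, solves the linearized problem by Lax--Milgram, and applies Brouwer's fixed-point theorem to the solution map $F(\overline{\Psi}_\disc)=\Psi_\disc$, which maps $\bfXd$ into a fixed ball. You instead apply the standard Brouwer-degree corollary (a continuous map $P$ on a finite-dimensional Hilbert space with $\SCAL{P(v)}{v}>0$ on a sphere has a zero inside the corresponding ball) directly to the nonlinear residual. Both are valid and both rest on the same cancellation \assum{3}; your route is slightly more economical, since it dispenses with verifying well-posedness of the linearized problems and the continuity of $F$ (which the paper only asserts can be ``easily checked''), while the paper's route has the side benefit of exhibiting the discrete solution as a fixed point of a well-defined linearized iteration, closer in spirit to how such schemes are actually solved.
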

\begin{proof}[Proof of Theorem \ref{convergence_abstract_non-linear}] The proof is divided into four steps. 
	
	\textbf{Step 1: }existence of a solution to the scheme.\\
	For any HD $\disc$, let $\overline{\Psi}_\disc \in \bfXd$ be given and ${\Psi_\disc} \in \bfXd$ be such that, for all $\Phi_\disc \in \bfXd$,
	\begin{align}
	&\mathcal{A}_{\overline{\Psi}_\disc}(\Psi_\disc,\Phi_\disc):=\mathcal{A}(\hd \Psi_\disc, \hd \Phi_\disc)+\mathcal{B}(\hd \overline{\Psi}_\disc, \nabla_\disc\Psi_\disc,\nabla_\disc\Phi_\disc)=\mathcal{L}(\Pi_\disc\Phi_\disc). \label{abstract_HS2}
	\end{align}
	Since $\mathcal{A}(\cdot,\cdot)$ is bilinear, $\mathcal{B}(\cdot,\cdot,\cdot)$ is trilinear and $\overline{\Psi}_\disc \in \bfXd$ is fixed,  $\mathcal{A}_{\overline{\Psi}_\disc}(\cdot,\cdot)$ is bilinear.
	Therefore, ${\Psi_\disc}$ is sought as a solution to the bilinear system $\mathcal{A}_{\overline{\Psi}_\disc}(\Psi_\disc,\Phi_\disc)= \mathcal{L}(\Pi_\disc\Phi_\disc)$. Since  $\bfXd$ is finite-dimensional  and $ \mathcal{L}(\Pi_\disc\cdot)$ is linear,  $ \mathcal{L}(\Pi_\disc\cdot)$ is a continuous linear functional on $\bfXd$. Use the fact that $\mathcal{B}(\hd \overline{\Psi}_\disc, \nabla_\disc\Psi_\disc,\nabla_\disc\Psi_\disc)=0$ (see \assum{3}) and $\mathcal{A}(\cdot,\cdot)$ is coercive, to infer that
	\begin{align}
	\mathcal{A}_{\overline{\Psi}_\disc}(\Psi_\disc,\Psi_\disc)=\mathcal{A}(\hd \Psi_\disc,\hd \Psi_\disc)\ge\overline{\alpha}\norm{\hd \Psi_\disc}{}^2=\overline{\alpha}\norm{\Psi_\disc}{\disc}^2, \label{abstract_HS_coercive}
	\end{align}
	where $\overline{\alpha}$ is the coercivity constant of $\mathcal{A}(\cdot,\cdot)$. Thus, $\mathcal{A}_{\overline{\Psi}_\disc}(\cdot,\cdot)$ is coercive. The Lax Milgram Lemma implies the existence and uniqueness of solution $\Psi_\disc$ satisfying \eqref{abstract_HS2}.
	Define $F: \bfXd\rightarrow \bfXd$ by $F(\overline{\Psi}_\disc)=\Psi_\disc$, where $\Psi_\disc$ is the solution to \eqref{abstract_HS2}. Since  $\bfXd$ is finite dimensional, we can easily check that $F$ is continuous. Moreover,  \eqref{abstract_HS_coercive} and \eqref{abstract_HS2} imply,
	$$
	\overline{\alpha}\norm{\Psi_\disc}{\disc}^2 \le \mathcal{A}_{\overline{\Psi}_\disc}(\Psi_\disc,\Psi_\disc) = \mathcal{L}(\Pi_\disc\Psi_\disc)\le \norm{\mathcal{L}}{}\norm{\Pi_\disc\Psi_\disc}{} \le C_\disc \norm{\mathcal{L}}{}\norm{\Psi_\disc}{\disc},$$
	where $C_\disc$ is defined by \eqref{def.CD}. Hence,
	\be 
	\norm{\Psi_\disc}{\disc} \le\overline{\alpha}^{-1}C_\disc \norm{\mathcal{L}}{}:=\mathnormal{R}_\disc.\label{abstract_HS_stability}
	\ee
	This shows that $F$ maps $ \bfXd$ into the closed ball $B_{\mathnormal{R}_\disc}$ of center 0 and radius $\mathnormal{R}_\disc$ with respect to  $\norm{{\cdot}}{\disc}$. Therefore, the Brouwer fixed point theorem proves that $F$ has at least one fixed point $\Psi_\disc$ in this ball. The equation \eqref{abstract_HS2} shows that this fixed point is a solution to \eqref{abstract_HS}. 
	
	From here onwards, let $\Psi_{\dm} \in  \bsymb{X}_{\disc_m,0}$ denote such a solution for $\disc=\disc_m.$
	
	\medskip
	
	\textbf{Step 2:} strong convergence of $\Pi_{\disc_m}\Psi_{\dm}$ and $\nabla_{\disc_m}\Psi_{\dm}$, and weak convergence of $\hessian_{\disc_m}\Psi_{\dm}$.
	
	From \eqref{abstract_HS_stability}, $\overline{\alpha}\norm{\hessian_{\dm} \Psi_{\dm}}{}=\overline{\alpha} \norm{\Psi_{\dm}}{\dm}\le  C_{\dm} \norm{\mathcal{L}}{}.$
	Thus,  $\norm{\Psi_{\dm}}{\disc_m}$ is bounded and Lemma \ref{regularity_nonlinear} gives a subsequence of $(\disc_m,\Psi_{\dm})_{m \in \N}$, and $\Psi \in \bfX$, such that $\Pi_{\disc_m} \Psi_{\dm}$ converges weakly to $\Psi$ in $\bsymb{L}^2(\O)$, $\nabla_{\disc_m} \Psi_{\dm}$ converges weakly to $\nabla \Psi$ in $\bsymb{L}^4(\O;\R^d)$, and $\hessian_{\disc_m} \Psi_{\dm}$ converges weakly to $\hessian \Psi$ in $\bsymb{L}^2(\O;\symd)$. The compactness hypothesis (Defintion \ref{def:coercive}) then shows the strong convergence of $\Pi_{\disc_m} \Psi_{\dm}$ to $\Psi$ in $\bsymb{L}^2(\O)$ and $\nabla_{\disc_m} \Psi_{\dm}$ to $\nabla \Psi$ in $\bsymb{L}^4(\O;\R^d).$
	
	\medskip
	
	\textbf{Step 3:} $\Psi$ is a solution to Problem \eqref{abstract_weak}.
	
Define $P_\disc:\bfX \rightarrow \bfXd$ by
			\begin{equation}\label{def:PD}
				P_\disc \Psi= \argmin_{w \in \bfXd}\big(\norm{\Pi_\disc w-\Psi}{}
			+\norm{\nabla_\disc w-\nabla\Psi}{L^4}	+\norm{\hd w-\hessian \Psi}{}\big)
		\end{equation}
	and let $\Phi\in \bfX$. The consistency of $(\disc_m)_{m \in \N}$ implies  $\Pi_{\disc_m}P_{\dm}\Phi \rightarrow \Phi$ in $\bsymb{L}^2(\O)$, $\nabla_{\disc_m}P_{\dm}\Phi \rightarrow \nabla\Phi$ in $\bsymb{L}^4(\O;\R^d)$ and $\hessian_{\disc_m}P_{\dm}\Phi \rightarrow \hessian\Phi$ in $\bsymb{L}^2(\O;\symd)$ as $m \rightarrow \infty$.
	 \begin{align*}
		&\mathcal{B}(\hessian_{\dm}{\Psi}_{\dm},\nabla_{\dm}\Psi_{\dm},\nabla_{\dm}P_{\dm}\Phi)-\mathcal{B}(\hessian \Psi, \nabla \Psi, \nabla \Phi)\nonumber\\
		&=\mathcal{B}(\hessian_{\dm}{\Psi}_{\dm},\nabla_{\dm}\Psi_{\dm},\nabla_{\dm}P_{\dm}\Phi-\nabla \Phi)\nonumber\\
		&\quad+\mathcal{B}(\hessian_{\dm}{\Psi}_{\dm},\nabla_{\dm}\Psi_{\dm}-\nabla \Psi,\nabla \Phi)+\mathcal{B}(\hessian_{\dm}{\Psi}_{\dm}-\hessian \Psi,\nabla\Psi,\nabla \Phi).	
		\end{align*}
		Set $l(\hessian_{\dm}{\Psi}_{\dm})=\mathcal{B}(\hessian_{\dm}{\Psi}_{\dm}, \nabla \Psi, \nabla \Phi).$ Since $\mathcal{B}(\cdot,\cdot,\cdot)$ is a trilinear continuous function, ${l}(\cdot)$ is a linear continuous functional on $\bsymb{L}^2(\O;\symd)$. The weak convergence of $(\hessian_{\dm}{\Psi}_{\dm})_{m \in \N}$ then ensures that ${l}(\hessian_{\dm}{\Psi}_{\dm}) \rightarrow {l}(\hessian \Psi)$ as $m \rightarrow \infty$. The continuity of $\mathcal{B}(\cdot,\cdot,\cdot)$ yields a constant $C_b$ such that
		\begin{align*}
		&\big|\mathcal{B}(\hessian_{\dm}{\Psi}_{\dm},\nabla_{\dm}\Psi_{\dm},\nabla_{\dm}P_{\dm}\Phi)-\mathcal{B}(\hessian \Psi, \nabla \Psi, \nabla \Phi)\big|\\
		& \le C_b\norm{\hessian_{\dm}{\Psi}_{\dm}}{}\norm{\nabla_{\dm}\Psi_{\dm}}{{L}^4}\norm{\nabla_{\dm}P_{\dm}\Phi-\nabla \Phi}{{L}^4}
		\\
		&\quad+C_b\norm{\hessian_{\dm}{\Psi}_{\dm}}{}\norm{\nabla_{\dm}\Psi_{\dm}-\nabla \Psi}{{L}^4}\norm{\nabla \Phi}{{L}^4}+\big|{l}(\hessian_{\dm}{\Psi}_{\dm})-{l}(\hessian \Psi)\big|.
		\end{align*}
		Since strongly/weakly convergent sequences in normed space are bounded, the convergences of $(\nabla_{\dm}P_{\dm}\Phi)_{m\in\N}$, $(\nabla_{\dm}\Psi_{\dm})_{m\in\N}$ and $(l(\hessian_{\dm}{\Psi}_{\dm}))_{m\in\N}$ imply that 
		\be 
		\mathcal{B}(\hessian_{\dm}{\Psi}_{\dm},\nabla_{\dm}\Psi_{\dm},\nabla_{\dm}P_{\dm}\Phi) \rightarrow \mathcal{B}(\hessian \Psi, \nabla \Psi, \nabla \Phi) \mbox{ as } m \rightarrow \infty. \nonumber
		\ee
		 This, the bilinearity and continuity of $\mathcal A$ show that, as $m \rightarrow \infty$,
	\begin{align}
	\mathcal{A}(\hessian_{\dm}\Psi_{\dm},\hessian_{\dm}P_{\dm}\Phi)+\mathcal{B}(\hessian_{\dm}&{\Psi}_{\dm},\nabla_{\dm}\Psi_{\dm},\nabla_{\dm}P_{\dm}\Phi)\nonumber\\
	& \rightarrow \mathcal{A}(\hessian \Psi,\hessian \Phi)+\mathcal{B}(\hessian \Psi, \nabla \Psi, \nabla \Phi).\label{abstract_passlimit1}
	\end{align}
	
	Moreover, a direct consequence of $\Pi_{\disc_m}P_{\dm}\Phi \rightarrow \Phi$ in $\bsymb{L}^2(\O)$ as $m \rightarrow \infty$ shows that
	\be
	\mathcal{L}(\Pi_{\disc_m}P_{\dm}\Phi) \rightarrow \mathcal{L}( \Phi) \mbox{  as }m \rightarrow \infty. \label{abstract_passlimit2}
	\ee
	
	Letting $\Phi_{\dm}=P_{\dm}\Phi$ in \eqref{abstract_HS} for $\disc=\dm$,  use \eqref{abstract_passlimit1} and \eqref{abstract_passlimit2} to pass to the limit and conclude that $\Psi$ is a solution to \eqref{abstract_weak}.
	
	\medskip
	
	\textbf{Step 4:} strong convergence of $\hessian_{\disc_m}\Psi_{\dm}$.
	
	The strong convergence of $\Pi_\disc\Psi_{\dm}$ and \assum{3} enable us to pass to the limit in \eqref{abstract_HS} for $\disc=\disc_m$ to see that
	\begin{align*}
	\lim\limits_{m \rightarrow \infty}\mathcal{A}(\hd \Psi_{\dm},\hd \Psi_{\dm})	= \lim\limits_{m \rightarrow\infty}\mathcal{L}(\Pi_\disc\Psi_{\dm}) =\mathcal{L}(\Psi)
	=\mathcal{A}(\hessian \Psi,\hessian \Psi),
	\end{align*}
	since $\Psi$ is a solution to \eqref{abstract_weak}. The coercivity and bilinearity of $\mathcal A$, and the weak convergence of $\hessian_{\dm}\Psi_{\dm}$ therefore lead to
	\begin{align*}
	\limsup_{m \rightarrow \infty}\overline{\alpha}	\norm{\hessian_{\disc_m}\Psi_{\dm}-\hessian \Psi}{}^2\le	\limsup\limits_{m \rightarrow \infty}\mathcal{A} (\hessian_{\disc_m}\Psi_{\dm}-\hessian \Psi, \hessian_{\disc_m}\Psi_{\dm}-\hessian \Psi) &=0.
	\end{align*}
	This shows that $\norm{\hessian_{\disc_m}\Psi_{\dm}-\hessian \Psi}{} \rightarrow 0 $  as $ m \rightarrow \infty$.
\end{proof}

\begin{remark}

As seen in Section \ref{sec:conf}, it is easy to construct a coercive, consistent, limit-conforming and compact sequence of HDs. A consequence of this and Theorem \ref{convergence_abstract_non-linear} leads to the existence of a solution to the abstract problem \eqref{abstract_weak} that in particular applies to the the stream function vorticity formulation of the incompressible 2D Navier--Stokes problem and von K\'{a}rm\'{a}n equations approximated using the conforming and nonconforming FEMs and the GR methods.
\end{remark}

\section{Numerical results}\label{sec.numericalresults}
This section deals with the numerical results for the Navier--Stokes (NS) equation in stream function vorticity formulation and the von K\'{a}rm\'{a}n (vK) equations using the GR method and the Morley ncFEM. Define 
\begin{align*}
&\err_\disc(\bu):=\frac{\norm{\Pi_\disc u_\disc -\bu}{}}{\norm{\bu}{}},\quad \err_\disc(\nabla\bu) :=\frac{\norm{\nabla_\disc u_\disc -\nabla\bu}{}}{\norm{\nabla\bu}{}}, \\
& \err_\disc(\hessian\bu) :=\frac{\norm{\hd u_\disc -\hessian\bu}{}}{\norm{\hessian\bu}{}},
\end{align*}
where $\bu$ is the continuous solution and $u_\disc$ is the corresponding HS solution. In the tables, $h$ and $\textbf{nu}$ denote the mesh size and the numbers of unknowns. The model problem is constructed in such a way that the exact solution is known. The discrete problem is solved using Newton's method. The uniform mesh refinement has been done by red-refinement criteria, where each triangle is subdivided into four sub-triangles by connecting the midpoints of the edges.

\subsection{Numerical results for GR Method}\label{numericalresultGR} Let the computational domain be $\Omega=(0,1)^2$, and consider a family of meshes made of uniform triangulations. The finite dimensional space $V_h$ is the conforming $\Poly_1$ space. It seems that the GR method was previously never considered for non-linear problem. 

%
\subsubsection{\bf Navier--Stokes equation}  
Let the exact solution be given by $\bu=x^2y^2(1-x)^2(1-y)^2$. Then choosing $\nu=1$, the load function is computed using
$$\Delta^2\bu + \frac{\partial}{\partial x}\bigg((-\Delta \bu)\frac{\partial \bu}{\partial y}\bigg)- \frac{\partial}{\partial y}\bigg((-\Delta \bu)\frac{\partial \bu}{\partial x}\bigg)=f.$$
The errors and orders of convergence for the numerical approximation of $\bu$ are presented in Table \ref{table1.grns}. As seen in the table, the rate of convergence is nearly quadratic in $L^2$ and $H^1$ norms and is linear in $H^2$ norm. These observed numerical rates of convergence are as expected for the GR method (in comparison with the linear case \cite{HDM_linear}). A rigorous proof of the theoretical orders of convergence is a proposed future work. 

\begin{table}[h!!]
	\caption{\small{(GR/NS) Convergence results for the relative errors of $\bu$}}
	{\small{\footnotesize
			\begin{center}
				\begin{tabular}{|c |c
						||c|c||c | c ||c|c|| c|c||c|c||c|c||}
					\hline
					$h$&	$\textbf{nu}$ &$\err_\disc(\bu)$ & Order  & $\err_\disc(\nabla \bu)$ & Order  &$\err_\disc( \hessian \bu)$ & Order  \\ 
					\hline\\[-10pt]  &&\\[-10pt]
					0.353553&9&    1.050933 &       - &    0.567673 &      - &    0.582651&        - \\
					0.176777 & 49&  0.214195  &  2.2947 &   0.167145  &  1.7640&    0.267188&    1.1248\\
					0.088388& 225&  0.067498  &  1.6660 &   0.049952  &  1.7425&    0.128511 &   1.0560\\
					0.044194& 961&   0.019240&    1.8107 &   0.013806&    1.8552 &   0.062184&   1.0473	\\
					0.022097& 3969&0.005156&1.8999&0.003646&1.9209& 0.030460&1.0296\\	
					0.011049&16129&0.001336&1.9482& 0.000939& 1.9575&0.015060&1.0162					
					
					\\	\hline				
				\end{tabular}
			\end{center}	}}\label{table1.grns}
		\end{table}										
		
		\subsubsection{\bf The von K\'{a}rm\'{a}n equations}
In this example, $\Omega=(0,1)^2$ and $\bu=\bv=x^2y^2(1-x)^2(1-y)^2$. Then the right hand side load functions are computed as $f=\Delta^2\bu-[\bu,\bv]$ and $g=\Delta^2\bv+\frac{1}{2}[\bu,\bu]$. Table \ref{gr.vk_v} shows the relative errors and orders of convergence for the variables $\bu$ and $\bv$. The table provides rates of convergence close to quadratic in $L^2$ and $H^1$ norms, and linear in the energy norm for both the variables. 
		\begin{table}[h!!]
			\caption{\small{(GR/vK) Convergence results for the relative errors of $\bu$ and $\bv$}}
			{\small{\footnotesize
					\begin{center}
						\begin{tabular}{||c |c
								||c|c||c | c ||c|c|| c|c||c|c||c|c||}
							\hline
							$h$&	$\textbf{nu}$&$\err_\disc(\bu)$ & Order  & $\err_\disc(\nabla \bu)$ & Order  &$\err_\disc( \hessian \bu)$ & Order  \\ 
							\hline\\[-10pt]  &&\\[-10pt]
							0.353553&9& 1.049207&   -&   0.567835&    -& 0.582623&    -\\
							0.176777&49&0.214594&  2.2896& 0.167636& 1.7601& 0.267284& 1.1242\\
							0.088388&225&0.067946& 1.6591&  0.050446& 1.7325&  0.128565&  1.0559\\
							0.044194&961&0.019702&1.7860& 0.014295& 1.8192&   0.062217& 1.0471\\
							0.022097&3969&0.005632&1.8068&0.004146&1.7858&0.030483&1.0293\\
							0.011049&16129&0.001844&1.6109&0.001483&1.4828&0.015082& 1.0152\\
							\hline
						\end{tabular}
						\begin{tabular}{ ||c|c||c |c||c| c|| c|c||c|c||c|c|}
							\hline
							$h$&$\textbf{nu}$&	$\err_\disc(\bv)$ & Order&$\err_\disc(\nabla \bv)$ & Order  &$\err_\disc(\hessian\bv)$ & Order  \\ 
							\hline\\[-10pt]  &&\\[-10pt]
							0.353553&9&1.051793&    -&0.567587&     -& 0.582660&    -\\
							0.176777&49&0.213996&  2.2972&0.166900& 1.7659& 0.267141& 1.1251\\
							0.088388&225& 0.067275& 1.6694&0.049707&  1.7475&   0.128485& 1.0560\\
							0.044194&961&0.019011& 1.8232& 0.013567&   1.8733& 0.062171& 1.0473\\
							0.022097&3969&0.004929&1.9474& 0.003417&1.9894&0.030454&1.0296\\
							0.011049&16129&0.001124&2.1325&0.000742&2.2036& 0.015059& 1.0160\\
							\hline							
						\end{tabular}
					\end{center}}	}\label{gr.vk_v}
				\end{table}

\subsection{Numerical results for Morley FEM}

\subsubsection{\bf Navier--Stokes equation}\label{eg.ns} For the example considered in Section 7.1.1, the errors and order of convergences are presented in Table \ref{table1.morleyns}. Observe that a linear order of convergence is obtained for $\bu$ in the energy norm, and quadratic orders of convergence are obtained in piecewise $H^1$ and $L^2$ norms. 
\begin{table}[h!!]
	\caption{\small{(Morley/NS) Convergence results for the relative errors of $\bu$}}
	{\small{\footnotesize
			\begin{center}
				\begin{tabular}{|c |c
						||c|c||c | c ||c|c|| c|c||c|c||c|c||}
					\hline
					$h$&	$\textbf{nu}$ &$\err_\disc(\bu)$ & Order  & $\err_\disc(\nabla \bu)$ & Order  &$\err_\disc( \hessian \bu)$ & Order  \\ 
					\hline\\[-10pt]  &&\\[-10pt]
					1.00000& 5& 0.0135922&    -& 0.027680&     -& 0.147997&          -\\
					0.50000&25& 0.003499& 1.9579& 0.008910& 1.6353& 0.083508& 0.8256\\
					0.25000&113& 0.000923& 1.9225&  0.002578& 1.7890& 0.042875& 0.9618\\
					0.12500&481& 0.000246& 1.9102&  0.000720& 1.8406& 0.022240& 0.9470\\
					0.06250&1985&0.000063& 1.9700&0.000187& 1.9472& 0.011261&0.9818\\
					0.03125&8065&0.000016&  1.9918& 0.000047& 1.9855& 0.005650& 0.9950
					
					\\	\hline				
				\end{tabular}
			\end{center}	}}\label{table1.morleyns}
		\end{table}										
				\subsubsection{\bf The von K\'{a}rm\'{a}n equations}\label{sec.vkMorley}
				
				The results of numerical experiments for the Morley ncFEM for the von K\'{a}rm\'{a}n equations are presented here, as the formulation in this article is different from that in \cite{Brezzi,ng1,ng2} (see Remark \ref{remark.vk.wf}).
				\smallskip
				
				\textsc{Example 1.}
			In this example, choose the data as in Section 7.1.2.
				\begin{table}[h!!]
					\caption{\small{(Morley/vK) Convergence results for the relative errors of $\bu$ and $\bv$, Example 1}}
					{\small{\footnotesize
							\begin{center}
								\begin{tabular}{|c |c
										||c|c||c | c ||c|c|| c|c||c|c||c|c||}
									\hline
									$h$&	$\textbf{nu}$ &$\err_\disc(\bu)$ & Order  & $\err_\disc(\nabla \bu)$ & Order  &$\err_\disc( \hessian \bu)$ & Order  \\ 
									\hline\\[-10pt]  &&\\[-10pt]
1.00000&5&  8.560933&   -&3.564432&     -& 2.585671&   -\\
0.50000&25& 2.204201&1.9575&1.145871&1.6372& 1.461266&0.8233\\
0.25000&113&0.581424&1.9226&0.331537&1.7892& 0.750127&0.9620\\
0.12500&481 &0.154705&1.9101&0.092576&1.8405& 0.389103&0.9470\\
0.06250&1985 &0.039490&1.9700&0.024008&1.9471&0.197022&0.9818\\	0.03125& 8065&0.009929&1.9918&0.006062&1.9855& 0.098852&0.9950					
									\\	\hline				
								\end{tabular}
								\begin{tabular}{ |c|c||c |c||c| c|| c|c||c|c||c|c|}
									\hline
									$h$&	$\textbf{nu}$&$\err_\disc(\bv)$ & Order&$\err_\disc(\nabla \bv)$ & Order  &$\err_\disc(\hessian\bv)$ & Order  \\ 
									\hline\\[-10pt]  &&\\[-10pt]
1.00000&5&8.564924&-&  3.566189&-& 2.586783&-\\
0.50000&25&2.204151&1.9582&1.145773& 1.6381& 1.461500&0.8237\\
0.25000&113&0.581494&1.9224&0.331586& 1.7889& 0.750404&0.9617\\
0.12500&481 &0.154705&1.9102&0.092575& 1.8407& 0.389239&0.9470\\
0.06250&1985 &0.039488&1.9700&0.024007&1.9472& 0.197091&0.9818\\
0.03125& 8065&0.009928&1.9918&0.006062&1.9855& 0.098886&0.9950\\							
									\hline				
								\end{tabular}
							\end{center}}	}\label{table2}
						\end{table}
						As seen in the Table \ref{table2}, the order of convergence in the energy norm (resp. $H^1$ and $L^2$ norms) is linear (resp. quadratic) for the displacement and Airy stress functions.
						\smallskip
						
						\textsc{Example 2.} Consider the L-shaped domain $\Omega=(-1,1)^2 \setminus\big{(}[0,1)\times(-1,0]\big{)}$. 
						 Choose the right hand functions such that the exact singular solution \cite{Grisvard92} in polar coordinates is given by
						\begin{align*}
						\bu=\bv=(r^2 \cos^2\theta-1)^2 (r^2 \sin^2\theta-1)^2 r^{1+ \gamma}g_{\gamma,\omega}(\theta),
						\end{align*}
						where $ \gamma\approx 0.5444837367$ is a non-characteristic 
						root of $\sin^2( \gamma\omega) =  \gamma^2\sin^2(\omega)$, $\omega=\frac{3\pi}{2}$,  and
						$g_{\gamma,\omega}(\theta)=(\frac{1}{\gamma-1}\sin ((\gamma-1)\omega)-\frac{1}{ \gamma+1}\sin(( \gamma+1)\omega))(\cos(( \gamma-1)\theta)-\cos(( \gamma+1)\theta))$ 
						$-(\frac{1}{\gamma-1}\sin(( \gamma-1)\theta)-\frac{1}{ \gamma+1}\sin(( \gamma+1)\theta))
						(\cos(( \gamma-1)\omega)-\cos(( \gamma+1)\omega)).$ 
						\begin{table}[h!!]
							\caption{\small{(Morley/vK) Convergence results for the relative errors of $\bu$ and $\bv$, Example 2}}
							{\small{\footnotesize
									\begin{center}
										\begin{tabular}{ |c|c||c | c ||c|c|| c|c||c|c||c|c||}
											\hline
											$h$ &	$\textbf{nu}$ &$\err_\disc(\bu)$ & Order  & $\err_\disc(\nabla \bu)$ & Order  &$\err_\disc(\hessian \bu)$ & Order  \\ 
											\hline\\[-10pt]  &&\\[-10pt]
											0.707107& 33&  2.826994&-& 1.985957& -&  1.758240&-\\
											
											0.353553&161& 0.874885& 1.6921& 0.623930& 1.6704&  0.984743&  0.8363\\
											
											0.176777&705& 0.250204&1.8060& 0.181811& 1.7789& 0.524270& 0.9094\\
											
											0.088388&2945&    0.071856&  1.7999&   0.053249& 1.7716 & 0.273319& 0.9397\\
											
											0.044194& 12033&    0.022050&  1.7044& 0.017351& 1.6178&   0.143736&   0.9272
											
											\\
											 0.022097&48641&0.007491& 1.5575&0.006560& 1.4033& 0.077744& 0.8866\\
												\hline				
										\end{tabular}
										\begin{tabular}{|c|c||c | c ||c|c|| c|c||c|c||c|c||}
											\hline
											$h$ &	$\textbf{nu}$&$\err_\disc(\bv)$ & Order  &$\err_\disc(\nabla \bv)$ & Order  &$\err_\disc(\hessian\bv)$ & Order  \\ 
											\hline\\[-10pt]  &&\\[-10pt]
											0.707107& 33&1.910146& -& 1.293881& -& 1.351562& -\\
											
											0.353553& 161&  0.794724& 1.2652&  0.569137& 1.1849&  0.966468&  0.4838\\
											
											0.176777&705& 0.229244& 1.7936& 0.167686& 1.7630& 0.527682&0.8731\\
											
											0.088388&2945&0.064624&1.8267& 0.047896& 1.8078&   0.275565& 0.9373\\
											
											0.044194& 12033&   0.019339& 1.7406& 0.015209&  1.6550&   0.144849&   0.9278
											\\
											0.022097&48641& 0.006411&1.5929& 0.005694&1.4175& 0.078259& 0.8882\\
												\hline				
										\end{tabular}
									\end{center}}	}\label{table4}
								\end{table}
								Since $\O$ is non-convex, we expect only sub-optimal order of convergences in the energy, $H^1$ and $L^2$ norms. Table \ref{table4} confirms these estimates numerically. 

\section{Conclusion}\label{sec:conclusion}

This paper focuses on the HDM for an abstract semilinear fourth order system of partial differential equations. The abstract model covers in particular the Navier--Stokes equations in stream function vorticity formulation, and the von K\'{a}rm\'{a}n equations. The HDM gives a generic analysis framework that simultaneously covers several numerical schemes, such as conforming and non-conforming finite elements, as well as schemes based on GR approaches. The convergence of the scheme is established, using compactness techniques and without assuming any smoothness or particular structure of the continuous solution. Numerical results on both Navier--Stokes and von K\'{a}rm\'{a}n equations, using the Morley FEM or the GR approach that are not available in literature, are presented. 

The convergence analysis via error estimates is also proved in the HDM framework (see \cite[Section 3.5.2]{DSthesis} for details) when this solution is regular, and a companion operator, that maps the discrete space to the continuous space, can be designed with approximation properties. This error estimates provide a linear order of convergence for conforming FEMs in the energy norm. The construction of an appropriate companion operator for the GR method is an ongoing work.

\medskip

\textbf{Acknowledgments: }The work of the first author was partially supported by the Australian Government through the Australian Research Council’s Discovery Projects funding scheme (project number DP170100605). The second author was supported by DST SERB MATRICS grant MTR/2017/000199.	
\bibliographystyle{abbrv}
\bibliography{Fourth_order_elliptic}


\appendix
\section{Proof of the properties associated with ncFEMs and GR Method}\label{appen.prop}
This section deals with the proofs of the properties associated with the ncFEMs and GR methods stated in Theorems \ref{th.MorleyAdini} and \ref{thm:HD.GR}.
\subsection{\bf Non-conforming FEM}
The preliminaries below are useful to prove the convergence of the Adini and Morley HDM for non-linear equations.

Let $h_\sigma$ denote the diameter of $\sigma$ and for all $w \in H^1(\mesh)$ (the broken Sobolev space $H^1$ on the mesh),
define  $\norm{{\cdot}}{dG,\mesh}$ by
$$
\norm{w}{dG,\mesh}^2:=\norm{\nabla_\mesh w}{}^2 + \sum_{\edge \in \edges}^{}\frac{1}{h_\edge}\norm{\llbracket w \rrbracket}{L^2(\edge)}^2.
$$


\begin{lemma}\cite[Theorems 5.3, 5.6]{DG_DA}\label{lemma.dG} Let $\ell\ge 0$ be a non-negative integer. It holds
	\begin{itemize}
		\item[(i)][Discrete Sobolev embedding] For all $v_h \in \mathbb{P}_\ell(\mesh)$, $\norm{v_h}{L^4} \lesssim  \norm{v_h}{dG,\mesh}.$
		\item [(ii)][Discrete Rellich theorem] Let $(\mesh_{h_m})_{m\in\N}$ be sequence of regular triangular or rectangular meshes, whose diameter $h_m$ tend to $0$ as $m\to\infty$. For all $m\in\N$, let $v_m\in\mathbb{P}_\ell(\mesh_{h_m})$. If $(\norm{v_m}{dG,\mesh_{h_m}})_{m\in\N}$ is bounded, then, for all $1 \le q < 2^*$ (where $2^*$ is a Sobolev exponent of $2$), the sequence $(v_m)_{m\in\N}$ is relatively compact in $L^q(\O)$.
		\item[(iii)] \cite[Lemma 6.2]{DS_HDM}
		Let $w\in H^1(\mesh)$. If for all $\edge \in \edges$ there exists $x_{\edge} \in \edge$ such that $\llbracket w \rrbracket (x_{\edge})=0$, then $\norm{w}{dG,\mesh} \lesssim \norm{\nabla_{\mesh}w}{}.$
	\end{itemize}
\end{lemma}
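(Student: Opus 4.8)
The three assertions are standard facts of discrete functional analysis on broken polynomial spaces, so my plan is to indicate the machinery rather than grind through the bookkeeping (indeed the statement is quoted from \cite{DG_DA,DS_HDM}). For (i), I would pass through a conforming companion: construct $\widetilde v_h\in H^1_0(\O)$ from $v_h$ by nodal averaging (an Oswald/Karakashian--Pascal interpolant), arranged so that $\norm{\nabla\widetilde v_h}{}^2\lesssim \norm{\nabla_\mesh v_h}{}^2+\sum_{\edge\in\edges}h_\edge^{-1}\norm{\llbracket v_h\rrbracket}{L^2(\edge)}^2=\norm{v_h}{dG,\mesh}^2$. Applying the continuous embedding $H^1_0(\O)\hookrightarrow L^4(\O)$ to $\widetilde v_h$ and controlling the remainder $v_h-\widetilde v_h$ in $L^4$ by an inverse inequality on the piecewise polynomial space $\mathbb{P}_\ell(\mesh)$ (whose $L^2$ size is governed by the same jump seminorm) then bounds $\norm{v_h}{L^4}$ by the full $dG$ norm. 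Alternatively one may prove a discrete Gagliardo--Nirenberg--Sobolev inequality directly for the $dG$ norm, the jump terms playing the role of the missing inter-element regularity.

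For (ii) I would invoke the Kolmogorov--Riesz--Fr\'echet compactness criterion applied to the extensions by zero of the $v_m$ to $\R^d$. Part (i) already provides a uniform bound in $L^4(\O)$, hence both tightness and boundedness. The crux is an equicontinuity-of-translations estimate of the type $\|v_m(\cdot+\eta)-v_m\|_{L^1(\R^d)}\lesssim (|\eta|+h_m)\norm{v_m}{dG,\mesh_{h_m}}$, in which the $|\eta|$ contribution comes from the broken gradient and the $h_m$ contribution accounts for the jumps across the edges crossed by a translation segment of length $|\eta|$. Since $\norm{v_m}{dG,\mesh_{h_m}}$ is bounded and $h_m\to 0$, the right-hand side tends to $0$ uniformly as $\eta\to 0$; interpolating this $L^1$ modulus of translation against the uniform $L^4$ bound controls the $L^q$ modulus for every $q<2^*$, which is exactly what the compactness criterion requires to conclude relative compactness in $L^q(\O)$.

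For (iii) the goal is to absorb the jump seminorm into the broken gradient, i.e. to show $\sum_{\edge\in\edges}h_\edge^{-1}\norm{\llbracket w\rrbracket}{L^2(\edge)}^2\lesssim\norm{\nabla_\mesh w}{}^2$. Fixing an interior edge $\edge$ shared by $K^+$ and $K^-$ and using the hypothesis $\llbracket w\rrbracket(x_\edge)=0$, a one-dimensional Poincar\'e inequality along $\edge$ gives $\norm{\llbracket w\rrbracket}{L^2(\edge)}\lesssim h_\edge\norm{\partial_\tau\llbracket w\rrbracket}{L^2(\edge)}$ with $\partial_\tau$ the tangential derivative; since $\partial_\tau\llbracket w\rrbracket=\llbracket\partial_\tau w\rrbracket$ involves only the tangential gradient, a trace inequality $\norm{\nabla w}{L^2(\edge)}^2\lesssim h_\edge^{-1}\norm{\nabla w}{L^2(K)}^2$ on each of $K^+,K^-$ converts this into $h_\edge^{-1}\norm{\llbracket w\rrbracket}{L^2(\edge)}^2\lesssim\norm{\nabla w}{L^2(K^+)}^2+\norm{\nabla w}{L^2(K^-)}^2$. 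Summing over edges and using mesh regularity, so that each cell borders a bounded number of edges, yields the claim.

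The step I expect to be delicate is the trace bound in (iii): for a generic $w\in H^1(\mesh)$ the trace of $\nabla w$ on $\edge$ need not even lie in $L^2(\edge)$, and the pointwise hypothesis $\llbracket w\rrbracket(x_\edge)=0$ is only meaningful when the edge traces are continuous. In the Morley and Adini applications this is harmless because $w$ is piecewise polynomial, so the \emph{inverse} trace inequality above is exactly valid; for the abstract statement one replaces the pointwise-Poincar\'e argument by a trace--Poincar\'e inequality phrased on $w$ itself, bounding $\norm{w^\pm-\overline w^{K^\pm}}{L^2(\edge)}$ by $h_\edge^{1/2}\norm{\nabla w}{L^2(K^\pm)}$ and exploiting the vanishing jump to align the subtracted constants. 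This is the subtle point, and it is precisely why (iii) is imported from \cite{DS_HDM} rather than reproved here.
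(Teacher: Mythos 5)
First, a point of comparison: the paper never proves this lemma --- it is imported verbatim from the cited references (parts (i)--(ii) from the dG monograph, part (iii) from the linear HDM paper), so there is no in-paper argument to measure you against; your sketch has to stand on its own. Parts (i) and (iii) of your proposal are sound. The Oswald/companion route for (i) is a legitimate alternative to the discrete Gagliardo--Nirenberg--Sobolev argument used in the reference (which you mention in passing), and the inverse-inequality bookkeeping for $v_h-\widetilde v_h$ does close in $L^4$ for $d=2$. For (iii), the edge-Poincar\'e plus trace argument is the natural one, and you correctly identify the real subtlety: the hypothesis $\llbracket w\rrbracket(x_\edge)=0$ is only meaningful when the edge traces are continuous. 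In every use made of (iii) in this paper, $w$ is a broken polynomial (Morley/Adini reconstructions or their broken gradients), so this is harmless, exactly as you say.

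The genuine gap is in (ii), and it is an exponent gap, not a cosmetic one. You interpolate the $L^1$ translation modulus against the uniform $L^4$ bound furnished by (i). Writing $\norm{f}{L^q}\le\norm{f}{L^1}^{\theta}\norm{f}{L^4}^{1-\theta}$ with $1/q=\theta+(1-\theta)/4$, the translation modulus in $L^q$ vanishes only for $q<4$; at $q=4$ one has $\theta=0$ and interpolation gives nothing. But the lemma claims relative compactness for all $q<2^*$, and in dimension $2$ this range strictly exceeds $4$; indeed the paper needs precisely the case $q=4$, since the compactness property of Definition \ref{def:coercive}(4) and its verification in Theorem \ref{th.MorleyAdini}(iv) require relative compactness of the reconstructed gradients in $L^4(\O;\R^d)$. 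Your argument, as written, does not prove the one instance of the lemma that the paper actually uses. The fix is to upgrade (i) to the full-range discrete Sobolev embedding $\norm{v_h}{L^p}\lesssim\norm{v_h}{dG,\mesh}$ for every $p<2^*$ (this is what the cited Theorem 5.3 provides; the paper only records the $L^4$ case because that is all it quotes), and then interpolate the $L^1$ modulus against an $L^p$ bound with $q<p<2^*$. Separately, your claim that $C(|\eta|+h_m)$ tends to $0$ ``uniformly as $\eta\to0$'' is not literally true: for the finitely many indices where $h_m$ is not small, the bound does not vanish. Either treat those finitely many terms by the translation continuity of each individual $L^q$ function, or use the sequential form of the Kolmogorov--Riesz criterion that tolerates a defect $\epsilon_m\to0$; alternatively, note that the sharper BV-based estimate $\norm{v_m(\cdot+\eta)-v_m}{L^1(\R^d)}\lesssim|\eta|\,\norm{v_m}{dG,\mesh_{h_m}}$ holds with no $h_m$ term at all, which restores the plain criterion. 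With these two corrections your outline becomes a complete proof.
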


\begin{lemma}\cite{HDM_linear,DS_HDM}\label{lemma.ncFEM}
	Let $\disc$ be a HD for the Morley (resp.\ Adini) ncFEM in the sense of Definition \ref{def.HD.Morley} (resp. Definition \ref{def.HD.Adini}). Then,
	\begin{align*}
	&\forall  v_\disc \in X_{\disc,0}, \;\norm{\Pi_\disc v_\disc}{} \lesssim \norm{\hd v_\disc}{} ,\quad \forall \xi \in H^2(\O;\R^{2 \times 2}),\, W_{\disc}(\xi) \lesssim h \norm{\xi}{H^2(\O;\R^{2 \times 2})}.
	\end{align*}
\end{lemma}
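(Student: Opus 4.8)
The plan is to prove the two estimates separately, writing $v:=\Pi_\disc v_\disc$, which is a broken polynomial (piecewise $\Poly_2$ for Morley, the local Adini space on each rectangle for Adini) whose reconstructed Hessian is the broken Hessian $\hd v_\disc=\hessian_\mesh v$. The engine for both estimates is that the jumps of $v$ and of $\nabla_\mesh v$ across each edge vanish at a well-chosen point, together with the associated zero-integral properties. For Morley this is read off from the definition of $V_h$: continuity at $\mathcal V_{\rm int}$ forces $\llbracket v\rrbracket$ to vanish at the two endpoints of each edge and, since $v|_\sigma$ is a quadratic, the tangential-derivative jump $\llbracket\partial_t v\rrbracket$ to vanish at the midpoint; the condition $\int_\sigma\llbracket\partial_n v\rrbracket=0$, with $\partial_n v|_\sigma$ affine, forces $\llbracket\partial_n v\rrbracket$ to vanish at the midpoint as well. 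For Adini, $v\in H^1_0(\O)\cap C^0(\overline\O)$ gives $\llbracket v\rrbracket=0$ on interior edges, and continuity of $\nabla v$ at vertices makes $\llbracket\nabla_\mesh v\rrbracket$ vanish at each vertex. The analogous statements on boundary edges follow from the vanishing degrees of freedom on $\mathcal V_{\rm ext}$ and $\edgesext$.

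For the Poincar\'e inequality I would first invoke the discrete Sobolev embedding of Lemma \ref{lemma.dG}(i) together with $\norm{\cdot}{}\lesssim\norm{\cdot}{L^4}$ on the bounded domain $\O$ to get $\norm{v}{}\lesssim\norm{v}{dG,\mesh}$, and then bound each piece of $\norm{v}{dG,\mesh}$ by $\norm{\hessian_\mesh v}{}$. Applying Lemma \ref{lemma.dG}(iii) to each component $\partial_i v$ of the broken gradient (whose jumps vanish at the chosen edge point) yields $\norm{\partial_i v}{dG,\mesh}\lesssim\norm{\nabla_\mesh\partial_i v}{}\le\norm{\hessian_\mesh v}{}$, so in particular $\norm{\nabla_\mesh v}{}\lesssim\norm{\hessian_\mesh v}{}$. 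For the function-jump part $\sum_\sigma h_\sigma^{-1}\norm{\llbracket v\rrbracket}{L^2(\sigma)}^2$ (present only for Morley), I would use a one-dimensional Poincar\'e on $\sigma$ — since $\llbracket v\rrbracket$ vanishes at the vertices — to get $\norm{\llbracket v\rrbracket}{L^2(\sigma)}\lesssim h_\sigma\norm{\llbracket\partial_t v\rrbracket}{L^2(\sigma)}$, and then the midpoint-vanishing of $\llbracket\partial_t v\rrbracket$ plus an inverse/trace estimate for the broken second derivatives to get $\norm{\llbracket\partial_t v\rrbracket}{L^2(\sigma)}\lesssim h_\sigma^{1/2}\norm{\hessian_\mesh v}{L^2(K)}$; summing over edges controls this part by $\norm{\hessian_\mesh v}{}$ with room to spare.

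For the limit-conformity estimate I would integrate by parts twice on each cell. Using that $\xi\in H^2(\O;\R^{2\times2})\subset C^0$ and $\nabla\xi\in H^1$ have single-valued traces on interior edges, summation over $\mesh$ gives
\[
\int_\O\big((\hessian:\xi)v-\xi:\hessian_\mesh v\big)\,\d\x
=\sum_{\sigma\in\edges}\int_\sigma\Big((\div\xi\cdot n)\llbracket v\rrbracket-(\xi n)\cdot\llbracket\nabla_\mesh v\rrbracket\Big)\d s,
\]
with the boundary-edge jumps interpreted through the homogeneous conditions. On each edge I would subtract the edge-averages of $\xi n\cdot n$, $\xi n\cdot t$ and $\div\xi\cdot n$; the zero-integral and zero-vertex-value properties of the jumps annihilate the averaged (leading) contributions, leaving only the $O(h_\sigma)$ oscillations of $\xi$. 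Estimating these oscillations by $h_\sigma^{1/2}\norm{\xi}{H^2(K)}$ via a trace-and-approximation bound, and the jumps by $h_\sigma^{1/2}\norm{\hessian_\mesh v}{L^2(K)}$ via their midpoint/vertex vanishing plus a trace inequality, yields an edgewise bound of order $h_\sigma\norm{\xi}{H^2(K)}\norm{\hessian_\mesh v}{L^2(K)}$. A Cauchy--Schwarz over edges then gives $\big|\int_\O(\,\cdot\,)\big|\lesssim h\norm{\xi}{H^2(\O;\R^{2\times2})}\norm{\hessian_\mesh v}{}$, and dividing by $\norm{\hd v_\disc}{}=\norm{\hessian_\mesh v}{}$ and maximising over $v_\disc$ produces $W_\disc(\xi)\lesssim h\norm{\xi}{H^2(\O;\R^{2\times2})}$.

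The main obstacle I expect is the bookkeeping of the jump terms in both parts: extracting the extra factor of $h$ from the correct vanishing point of each jump (midpoints for the normal and tangential derivative jumps, vertices for the function jump), and pairing each jump with the right edge-average of $\xi$ so that the vanishing-moment conditions kill exactly the leading term. The Morley case is the delicate one, since $v$ is genuinely discontinuous and both the function- and gradient-jump contributions must be handled; the Adini case trivialises Part~1 (it is $H^1$-conforming), but on rectangular cells the extra cubic monomials $x_1x_2^3,\,x_1^3x_2$ must be tracked through the trace and inverse inequalities.
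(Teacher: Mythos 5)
The paper never proves this lemma itself: it is quoted verbatim from \cite{HDM_linear,DS_HDM}, and those references rely on the classical Lascaux--Lesaint/Ciarlet consistency analysis. Measured against that, your Morley half is essentially sound. The zero-mean properties you invoke do hold there: $\int_\edge\llbracket\partial_n v\rrbracket\d s=0$ by definition of the Morley space (with $\partial_n v$ affine on $\edge$, hence vanishing at the midpoint), and $\int_\edge\llbracket\partial_t v\rrbracket\d s=0$ because $\llbracket v\rrbracket$ vanishes at the vertices; these kill the edge-averaged part of the $(\xi n)\cdot\llbracket\nabla_\mesh v\rrbracket$ term, which is exactly where the factor $h$ must be gained. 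One imprecision: $\llbracket v\rrbracket$ itself has \emph{no} zero-mean property, so the averaged part of $(\div\xi\cdot n)\llbracket v\rrbracket$ is not ``annihilated''; it is merely harmless, because $\norm{\llbracket v\rrbracket}{L^2(\edge)}\lesssim h_\edge^{3/2}\norm{\hessian_\mesh v}{L^2(\omega_\edge)}$ already makes a direct Cauchy--Schwarz bound $O(h_\edge)$. (Also, your Part~1 can be shortened: Lemma \ref{lemma.dG}$(iii)$ applied to $v$ itself, whose jumps vanish at vertices, gives $\norm{v}{dG,\mesh}\lesssim\norm{\nabla_\mesh v}{}$ in one line, which is how the paper argues in its coercivity/compactness steps.)

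The genuine gap is the Adini estimate on $W_\disc$. For Adini the function and tangential jumps vanish, so everything rests on $\int_\edge(\xi n\cdot n)\llbracket\partial_n v\rrbracket\d s$; but $\llbracket\partial_n v\rrbracket$ is a \emph{cubic} in the edge variable constrained only to vanish at the two endpoints --- the element imposes gradient continuity at vertices and no integral condition whatsoever. Hence $\int_\edge\llbracket\partial_n v\rrbracket\d s\neq0$ in general, the averaged contribution is not annihilated, and the crude bounds available ($\norm{\llbracket\partial_n v\rrbracket}{L^2(\edge)}\lesssim h_\edge^{1/2}\norm{\hessian_\mesh v}{L^2(\omega_\edge)}$, $|\overline{\xi n\cdot n}|\lesssim h_\edge^{-1/2}\norm{\xi n\cdot n}{L^2(\edge)}$) give only an $O(1)$ estimate, not $O(h)$. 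The missing idea is the rectangle-specific cancellation used in the literature: replace $\partial_n v$ by $(I-\Pi_1^\edge)\partial_n v$, where $\Pi_1^\edge$ interpolates at the endpoints (legitimate, since the interpolated parts cancel in the jump, and vanish on boundary edges), regroup the edge terms cell by cell, and pair the two \emph{opposite} edges of each rectangle. Since $\int_\edge(I-\Pi_1^\edge)\partial_n v\,\d s$ is proportional to the coefficient of the quadratic term of $\partial_n v$ in the edge variable, and for the Adini space this coefficient (coming from the monomial $x_1x_2^2$, resp.\ $x_1^2x_2$) is independent of the transversal variable, it is the \emph{same} on the two opposite edges; the paired per-cell functional therefore vanishes when $\xi$ is cellwise constant, as well as when $v|_\cell\in\Poly_2(\cell)$, and the bilinear Bramble--Hilbert lemma plus scaling then produces the $O(h)\norm{\xi}{H^2}$ bound. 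Without this opposite-edge pairing (or an equivalent device), your argument does not deliver the Adini half of the lemma.
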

We now prove the four properties associated with the Morley and Adini ncFEMs.

\begin{proof}[Proof of Theorem \ref{th.MorleyAdini}]	
%
	\textsc{(i) The Morley element.}
	
	{\it (i)} Let $v_\disc \in X_{\disc,0}$. Since $\llbracket \nabla_\disc v_\disc \rrbracket=0$ at the midpoints of the edges, Lemma \ref{lemma.dG}$(i)$ and Lemma \ref{lemma.dG}$(iii)$ lead to
	\be \label{coercive_hessian}
	\norm{\nabla_\disc v_\disc}{L^4} \lesssim \norm{\nabla_\disc v_\disc}{dG,\mesh}  \lesssim  \norm{\hessian_{\disc}v_\disc}{}.\ee
	The combination of this and Lemma \ref{lemma.ncFEM}$(i)$ prove $C_\disc \lesssim 1$.
	
	{\it (ii)} For $\varphi\in H^{3}(\O)\cap H^2_0(\O)$, the standard interpolant \cite{PLPL75,ciarlet1978finite} satisfies
	\begin{align} \label{Pd.nonconforming}
	&\norm{I_h\varphi-\varphi}{}\lesssim h^{3} \norm{\varphi }{H^3(\O)}, \;\norm{\nabla_\mesh I_h\varphi-\nabla \varphi}{L^4} \lesssim  h^{\frac{3}{2}} \norm{\varphi}{H^3(\O)},\nonumber \\
	&\mbox{ and }\norm{\hessian_\mesh I_h\varphi-\hessian\varphi}{} \lesssim  h \norm{\varphi }{H^3(\O)}.
	\end{align}
	Hence, selecting $w \in X_{\disc,0}$ corresponding to the degrees of freedom of $I_h\varphi$ in the definition of $S_\disc(\varphi)$ yields the result on consistency.

	{\it (iii)} For all $\cell \in \mesh$ and for any $\edge \in \edgescv$, denote the unit vector normal to $\edge$ outward $\cell$ by $n_{\cell,\edge}$. An integration by parts shows
	\begin{align} \label{limitconformitygrad}
	\bigg|	\int_\O \Big(\nabla_\disc v_\disc \cdot \phi + \Pi_\disc v_\disc \mbox{div} \phi \Big)\d\x \bigg|&  =\bigg|\sum_{\cell \in \mesh}^{}\sum_{\edge \in \edgescv}^{} \int_{\edge}^{}(\phi \cdot n_{\cell,\edge})\Pi_\disc v_\disc \d s(\x)\bigg|.
	\end{align}
	The proof of the result on $\hat{W}_{\disc}$ then follows from a slightly modified version of \cite[Lemma 3.5]{PLPL75}. However, for the sake of completeness, we provide a proof.
	Let $V_1$ be the space of all globally continuous piecewise linear functions and let $\Pi_1:V_h \rightarrow V_1$ be the interpolation operator such that $\Pi_1v_h$ equal to $v_h$ at the vertices of all triangle $K$, $v_h \in V_h$. Then 
	\be \label{limitconformity22}
	\sum_{\cell \in \mesh}^{}\sum_{\edge \in \edgescv}^{} \int_{\edge}^{}(\phi \cdot n_{\cell,\edge})\Pi_\disc v_\disc \d s(\x)=\sum_{\cell \in \mesh}^{}\sum_{\edge \in \edgescv}^{} \int_{\edge}^{}P(v_h-\Pi_1{v_h})\d s(\x),	
	\ee 
	where $P=\phi \cdot n_{\cell,\edge} $ and $v_h=\Pi_\disc v_\disc$. Note that a change of variables yields
	\be \label{limitconformityrefPi1}
	\int_{\edge\in \edgescv}^{}P(v_h-\Pi_1{v_h})\d s(\x)=|\edge|\int_{\hat{\edge} \in \edges_{\hat{\cell}}}\hat{P}(\hat{v_h}-\hat{\Pi_1}{\hat{v_h}})\d s(\x),
	\ee
	where ${\hat{\cell}}$ is the reference finite element. The continuous trace inequality \cite{DG_DA}, the discrete trace inequality \cite[Lemma 1.46]{DG_DA}, an interpolation estimate \cite{ciarlet1978finite} and Young's inequality show
	$$\bigg|\int_{\hat{\edge} \in \edges_{\hat{\cell}}}\hat{P}(\hat{v_h}-\Pi_1{\hat{v_h}})\d s(\x)\bigg|\lesssim \big(|{\hat{P}}|_{0,\hat{K}}+|{\hat{P}}|_{1,\hat{K}}\big)|{\hat{v}}|_{2,\hat{K}},$$ 
	where $|\cdot|_{\ell,\hat{K}}$ denotes the seminorm on $H^\ell(\hat{K})$, $\ell\ge 0$. Substitute the above estimate in \eqref{limitconformityrefPi1}, transform from $\hat{K}$ to $K$ using $|\hat{\psi}|_{\ell,\hat{K}} \lesssim h^{\ell-1}|\psi|_{\ell,K}$ for $\psi \in H^\ell(K)$ (\cite[Theorem 3.1.2]{ciarlet1978finite}), sum over all the edges and then use \eqref{limitconformity22} to obtain
	\begin{align}
	\bigg|\sum_{\cell \in \mesh}^{}\sum_{\edge \in \edgescv}^{} \int_{\edge}^{}(\phi &\cdot n_{\cell,\edge})\Pi_\disc v_\disc \d s(\x)\bigg|\lesssim (h \norm{\phi}{}+h^2 \norm{\nabla\phi}{} )\norm{\hd v_\disc}{}.\nonumber 
	\end{align}
	This in \eqref{limitconformitygrad} reads
	$$\left|	\int_\O \Big(\nabla_\disc v_\disc \cdot \phi + \Pi_\disc v_\disc \mbox{div} \phi \Big)\d\x \right|\lesssim (h \norm{\phi}{}+h^2 \norm{\nabla\phi}{})\norm{\hd v_\disc}{}.$$
	The above estimate with Lemma \ref{lemma.ncFEM} leads to the required estimate on $\hat{W}_{\disc}$ and $W_\disc$ and thus establish limit-conformity.

	{\it (iv)} Let a sequence $u_m \in X_{\disc_m,0}$ be such that $(\norm{u_m}{\disc_m})_{m \in \N}$ is bounded. Since $\llbracket \Pi_{\disc_m} u_m \rrbracket$ = 0 at the edge vertices, Lemma \ref{lemma.dG}$(iii)$ and estimate \eqref{coercive_hessian} yield
	$ \norm{\Pi_{\dm} u_m}{dG,\mesh_m} \lesssim \norm{\nabla_{\dm} u_m}{} \lesssim \norm{\nabla_{\dm} u_m}{L^4}\lesssim  \norm{\hessian_{\dm}u_m}{}.$  At the edge midpoints $\llbracket \nabla_{\dm} u_m \rrbracket=0$ and consequently, Lemma \ref{lemma.dG}$(iii)$ with $w=\nabla_{\dm} u_m$ shows
	$ \norm{\nabla_{\dm} u_m}{dG,\mesh_m}$ $\lesssim \norm{\hessian_{\dm}u_m}{}.$
	Use the fact that $(\norm{u_m}{\disc_m})_{m \in \N}$ is bounded to deduce $(\Pi_{\disc_m}u_m)_{m \in \N}$ and $(\nabla_{\disc_m}u_m)_{m \in \N}$ are bounded in the $\norm{\cdot}{dG,\mesh_m}$ norm. Lemma \ref{lemma.dG}$(ii)$ then gives the relative compactness of $(\Pi_{\disc_m}u_m)_{m \in \N}$ in $L^2(\O)$, and of $(\nabla_{\disc_m}u_m)_{m \in \N}$ in $L^4(\O;\R^d)$.
	
	\medskip
	
	\textsc{(ii) The Adini element.}
	
	{\it (i)} Since $\nabla_\disc v_\disc$ is continuous at the vertices of elements in $\mesh$ and $\nabla_\disc v_\disc$ vanish at vertices along $\partial \O$, $\llbracket \nabla_\disc v_\disc \rrbracket=0$ at the vertices. As a consequence, Lemma \ref{lemma.dG}$(i)$-$(iii)$ leads to
	$	\norm{\nabla_\disc v_\disc}{L^4} \lesssim \norm{\nabla_\disc v_\disc}{dG,\mesh}  \lesssim  \norm{\hessian_{\disc}v_\disc}{}.$ 
This and Lemma \ref{lemma.ncFEM} concludes that $C_\disc {\lesssim 1}$.
	
	\noindent {\it (ii)} The standard interpolant satisfies \eqref{Pd.nonconforming} and hence yields the desired estimate on the consistency measure $S_\disc$.
	
	\noindent {\it (iii)} Any $\edge \in \edges$ is associated to a fixed orientation of the unit normal vector $n_\edge$ on $\edge$. For $K \in \mesh$ and $\edge\in\edges_K$, set $n_\edge:=n_{\cell,\edge}$. Apply integration by parts in each cell to obtain
	$$\bigg|	\int_\O \Big(\nabla_\disc v_\disc \cdot \phi + \Pi_\disc v_\disc \mbox{div} \phi \Big)\d\x \bigg|=\bigg|\sum_{\edge \in \edges}^{} \int_{\edge}^{}(\phi \cdot n_\sigma)\llbracket \Pi_\disc v_\disc \rrbracket \d s(\x)\bigg|.$$
	Here, the jump has been chosen in a compatible way with the orientation determined by $n_\sigma$. Since $\Pi_\disc v_\disc \in H^1_0(\O)\cap C(\overline{\O})$, $\llbracket \Pi_\disc v_\disc \rrbracket=0$, which implies $\hat{W}_{\disc}(\phi)=0$.
	This and Lemma \ref{lemma.ncFEM} yields an estimate on the limit-conformity measures, namely $\hat{W}_{\disc}$ and $W_\disc$.
	
	{\it (iv)} The compactness proof follows as for the Morley element using the fact that $\llbracket \Pi_\disc v_\disc \rrbracket=0$ and $\llbracket \nabla_\disc v_\disc \rrbracket=0$ at the vertices.
\end{proof}
\subsection{\bf Method based on GR operators}
Recall the properties $\propty{0}-\propty{5}$ method based on GR operators associated with Section \ref{sec.grmethod}. 
\begin{lemma} \cite[Theorem 4.3]{HDM_linear}\label{lemma.GR}
	Let $\disc$ be a HD in the sense of Definition \ref{def:HDM:GR} and $(V_h,I_h,Q_h,\stab_h)$ satisfying \propty{0}--\propty{5}. Let $v \in X_{\disc,0}$. Then 
	\begin{itemize}
		\item[(i)] 	$\norm{\nabla (Q_h \nabla v)}{}+\norm{Q_h\nabla v-\nabla v}{}\le \sqrt{2}\norm{\hd v}{}$,
		\item[(ii)]   $\norm{\Pi_\disc v}{} \lesssim  \norm{\hd v}{},$
		\item[(iii)] $\forall \phi \in {H^2}(\O)$, $\norm{\Pi_\disc I_h\phi-\phi}{}\lesssim h\norm{\phi}{{H^2}(\O)},$
		\item[(iv)] $\forall \varphi \in W$, $\norm{\nabla (Q_h \nabla  I_h\varphi)-\nabla\nabla \varphi}{}\lesssim h \norm{\varphi}{W}$ and  $\norm{\hd I_h\varphi-\hessian \varphi}{}\lesssim h\norm{\varphi}{W},$
		\item[(v)] $\forall \:\xi \in H^2(\O;\R^{d \times d})$, $W_{\disc}(\xi) \lesssim h \norm{\xi}{H^2(\O;\R^{d \times d})}.$
	\end{itemize}	
\end{lemma}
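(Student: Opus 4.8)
The plan is to treat the five assertions essentially in order, with the stability estimate (i) serving as the backbone for (ii), (iv) and (v). The cornerstone is the observation that the two pieces of $\hd v=\nabla(Q_h\nabla v)+\stab_h\otimes(Q_h\nabla v-\nabla v)$ are $L^2(\O;\R^{d\times d})$-orthogonal: cell by cell, $\nabla(Q_h\nabla v)$ lies in $\nabla V_h(\cell)^d$ while the stabilisation term lies in $\stab_{h|K}\otimes(Q_h\nabla-\nabla)(V_h(\cell))$, and these are orthogonal by \propty{5}. Hence Pythagoras gives $\norm{\hd v}{}^2=\norm{\nabla(Q_h\nabla v)}{}^2+\norm{\stab_h\otimes(Q_h\nabla v-\nabla v)}{}^2$, and the pointwise bound $|\stab_h|\ge 1$ from \propty{5} yields $\norm{\stab_h\otimes(Q_h\nabla v-\nabla v)}{}\ge\norm{Q_h\nabla v-\nabla v}{}$; the elementary inequality $a+b\le\sqrt2\,(a^2+b^2)^{1/2}$ then produces (i). For (ii) I would bound $\norm{v}{}\lesssim\norm{\nabla v}{}$ by the Poincar\'e inequality (as $v\in V_h\subset H^1_0(\O)$), split $\nabla v=(\nabla v-Q_h\nabla v)+Q_h\nabla v$, control the first summand by (i) and the second by applying Poincar\'e once more to $Q_h\nabla v\in V_h^d\subset H^1_0(\O)^d$, so that $\norm{Q_h\nabla v}{}\lesssim\norm{\nabla(Q_h\nabla v)}{}\lesssim\norm{\hd v}{}$.

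Estimate (iii) is the plain $\mathbb P_1$ interpolation bound, since $\Pi_\disc I_h\phi=I_h\phi$. For (iv) the key device is an inverse-inequality upgrade: to bound $\norm{\nabla(Q_h\nabla I_h\varphi)-\nabla\nabla\varphi}{}$ I would insert $\pi_h$, a componentwise nodal interpolant of $\nabla\varphi$ in $V_h^d$, and write the difference as $\nabla(Q_h\nabla I_h\varphi-\pi_h)+(\nabla\pi_h-\nabla\nabla\varphi)$. The second term is $O(h)\norm{\varphi}{W}$ by standard interpolation; for the first, the inverse inequality of \propty{0} supplies a factor $h^{-1}$ acting on $\norm{Q_h\nabla I_h\varphi-\pi_h}{}$, which is $O(h^2)\norm{\varphi}{W}$ because both $Q_h\nabla I_h\varphi$ (via \propty{2}) and $\pi_h$ approximate $\nabla\varphi$ to second order in $L^2$, so the net rate is $O(h)$. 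The Hessian bound in (iv) then follows by adding the stabilisation term $\stab_h\otimes(Q_h\nabla I_h\varphi-\nabla I_h\varphi)$, estimated using $|\stab_h|\lesssim1$ together with $\norm{Q_h\nabla I_h\varphi-\nabla\varphi}{}\lesssim h^2\norm{\varphi}{W}$ (\propty{2}) and $\norm{\nabla\varphi-\nabla I_h\varphi}{}\lesssim h\norm{\varphi}{H^2(\O)}$ (\propty{0}).

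The genuinely technical part, and the step I expect to be the main obstacle, is the limit-conformity defect (v). I would compute $\int_\O((\hessian:\xi)\Pi_\disc w-\xi:\hd w)\d\x$ by a double integration by parts: using $\hessian:\xi=\div(\div\xi)$ and $Q_h\nabla w\in H^1_0(\O)^d$, the two boundary-free integrations reduce the defect to $\int_\O\div\xi\cdot(Q_h\nabla w-\nabla w)\d\x-\int_\O\xi:[\stab_h\otimes(Q_h\nabla w-\nabla w)]\d\x$. For the first integral, $Q_h\nabla w-\nabla w\in(Q_h\nabla-\nabla)(V_h)$ is orthogonal to $N_h$, so $\div\xi$ may be replaced by $\div\xi-\mu_h$ with $\mu_h\in N_h$; choosing $\mu_h$ via the density property \propty{4} applied to $\div\xi\in H^1(\O;\R^d)$ gives the factor $h\norm{\xi}{H^2(\O)}$, the remaining factor $\norm{Q_h\nabla w-\nabla w}{}$ being controlled through (i). For the second integral I would exploit \propty{5} in the sharp form that, since on each cell $\nabla V_h(\cell)^d$ contains the constant matrices, $\stab_h\otimes(Q_h\nabla w-\nabla w)$ has zero mean on every $\cell$; subtracting the cell-average of $\xi$ and invoking the Poincar\'e--Wirtinger inequality then produces the factor $h\norm{\nabla\xi}{}$, again multiplied by $\norm{\hd w}{}$ through (i). Combining the two contributions yields $W_\disc(\xi)\lesssim h\norm{\xi}{H^2(\O)}$. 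The delicate point throughout (v) is to route the $\div\xi$ part through \propty{4} and the stabilisation part through the mean-zero consequence of \propty{5}, precisely because the stabilisation field $\stab_h$ is only $L^\infty$ and cannot be differentiated.
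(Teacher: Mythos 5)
First, a point of comparison: the paper never proves Lemma \ref{lemma.GR} at all --- it is imported verbatim from \cite[Theorem 4.3]{HDM_linear}, and the appendix only proves Theorem \ref{thm:HD.GR} \emph{using} it. Your proposal is therefore a reconstruction rather than an alternative to an internal proof, and on its own merits it is essentially sound and follows what is surely the intended route: the cell-by-cell orthogonality of \propty{5} gives the Pythagoras identity $\norm{\hd v}{}^2=\norm{\nabla(Q_h\nabla v)}{}^2+\norm{\stab_h\otimes(Q_h\nabla v-\nabla v)}{}^2$, whence (i) via $|\stab_h|\ge 1$ and $a+b\le\sqrt{2}(a^2+b^2)^{1/2}$; (ii) follows by Poincar\'e applied to $v$ and to $Q_h\nabla v$, both of which lie in $H^1_0$; and your treatment of (v) --- two boundary-free integrations by parts (legitimate since $w$ and $Q_h\nabla w$ vanish on $\partial\O$), absorbing $\div\xi$ through the orthogonality defining $N_h$ together with \propty{4}, and killing the stabilisation term against cell averages of $\xi$ by Poincar\'e--Wirtinger --- is exactly the mechanism that \propty{4}--\propty{5} are designed for; the signs and each estimate check out.

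Two caveats, one of which is a genuine (though repairable) gap. In (iii) you cannot appeal to ``the plain $\mathbb{P}_1$ interpolation bound'': in this framework $I_h$ is an abstract interpolant about which only the $H^1$ estimate in \propty{0} is assumed, and no $L^2$ error bound for $I_h$ is given. The correct argument is one line: for $\phi\in H^2_0(\O)$ (as the statement should read, otherwise $I_h\phi$ is undefined), $I_h\phi-\phi\in H^1_0(\O)$, so the Poincar\'e inequality and \propty{0} give $\norm{I_h\phi-\phi}{}\lesssim\norm{\nabla I_h\phi-\nabla\phi}{}\lesssim h\norm{\phi}{H^2(\O)}$. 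More seriously, your proofs of (iv) and (v) invoke properties of $V_h$ that do not follow from \propty{0}--\propty{5} as stated: (iv) needs an interpolant $\pi_h$ of $\nabla\varphi$ into $V_h^d$ that is $O(h^2)$-accurate in $L^2$ and $O(h)$-accurate in $H^1$, and (v) needs constant matrices to lie in $\nabla V_h(\cell)^d$ (i.e.\ $\mathbb{P}_1(\cell)\subset V_h(\cell)$) so that \propty{5} forces zero cell averages of the stabilisation term. These hold for the piecewise-linear spaces of the practical constructions in \cite{HDM_linear}, and in (iv) some such extra hypothesis seems unavoidable: from the listed properties alone one can only bound $\norm{Q_h\nabla\varphi-\nabla\varphi}{}$ by $O(h)$ (via \propty{1}--\propty{3}), so the inverse-inequality upgrade with the only abstractly available intermediate $Q_h\nabla\varphi$ yields $O(1)$, not $O(h)$. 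You should therefore state explicitly that $V_h$ is assumed to enjoy these standard finite-element approximation and structure properties; with that proviso your proof is complete.
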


We can now prove the accuracy measures for the GR methods.

\medskip

\begin{proof}[Proof of Theorem \ref{thm:HD.GR}]~
	$(i)$ For $v\in X_{\disc,0}$, since $\nabla_\disc v\in V_h^d\subset H^1_0(\O;\R^d)$, the Sobolev embedding $H^1(\O) \hookrightarrow L^4(\O)$ and Lemma \ref{lemma.GR}$(i)$ yield
	$\norm{\nabla_\disc v}{L^4}=\norm{Q_h\nabla v}{L^4}$ $\lesssim \norm{\nabla(Q_h\nabla v)}{}
	\le 	\sqrt{2}\norm{\hd v}{}.$
	This estimate along with Lemma \ref{lemma.GR}$(ii)$ show that the coercivity measure $C_\disc\lesssim 1$.
	
	\smallskip
	
	$(ii)$ Let $\varphi \in W\subset H^3(\O)\cap H^2_0(\O)$ and  choose $v=I_h\varphi \in X_{\disc,0}$. A use of Sobolev embedding $H^1(\O) \hookrightarrow L^4(\O)$ and Lemma \ref{lemma.GR}$(iv)$ leads to
	\be \label{consistency_gradrec2}
	\norm{\nabla_\disc v-\nabla \varphi}{L^4}\lesssim \norm{\nabla (Q_h \nabla v)-\nabla\nabla \varphi}{}\lesssim h \norm{\varphi}{W}.
	\ee
	Thus, the consistency estimate on $S_\disc(\varphi)$ follows from \eqref{consistency_gradrec2} and Lemma \ref{lemma.GR}$(iii)$-$(iv)$.
	
	\smallskip
	
	\noindent $(iii)$ For $ \xi \in H^2(\O;\R^{d\times d})$, we have $W_{\disc}(\xi) \lesssim h \norm{\xi}{H^2(\O;\R^{d \times d})}$ from Lemma \ref{lemma.GR}$(v)$. Since $\Pi_\disc v \in H^1_0(\O)$ for all $v\in X_{\disc,0}$, an integration-by-parts shows that $\hat{W}_\disc\equiv 0$.
	
	$(iv)$ Let a sequence $u_m \in X_{\disc_m,0}$ be such that $(\norm{u_m}{\disc_m})_{m \in \N}$ is bounded. Since $\Pi_{\disc_m}u_m\in H^1_0(\Omega)$ and $Q_{h_m} \nabla u_m\in H^1_0(\Omega)^d$, a use of triangle inequality, the Poincar\'e inequality and Lemma \ref{lemma.GR}$(i)$ leads to
	\begin{align*}
	\norm{\nabla(\Pi_{\dm} u_m)}{}&=\norm{\nabla u_m}{}\le \norm{Q_{h_m} \nabla u_m}{}+\norm{Q_{h_m} \nabla u_m-\nabla u_m}{}\\
	&\lesssim \norm{\nabla Q_{h_m} \nabla u_m}{}+\norm{Q_{h_m} \nabla u_m-\nabla u_m}{}\lesssim \norm{\hessian_{\dm}u_m}{}.
	\end{align*}
	Since $(\norm{u_m}{\disc_m})_{m \in \N}$ is bounded, it follows that $(\nabla(\Pi_{\disc_m}u_m))_{m \in \N}$ is bounded in $L^2(\O;\R^d)$ and hence the standard Rellich theorem shows that $(\Pi_{\dm} u_m)_{m \in \N}$ is relatively compact in $L^2(\O)$. Note that $\nabla_{\disc_m}u_m=Q_{h_m} \nabla u_m\in H^1_0(\Omega)^d$. From Lemma \ref{lemma.GR}$(i)$, $\norm{\nabla Q_{h_m} \nabla u_m}{} \le C\norm{\hessian_{\dm}u_m}{}$. Thus, the Rellich and Sobolev imbedding theorems yield the required compactness property of $(\nabla_{\disc_m}u_m)_{m \in \N}$ in $L^4(\O;\R^d)$.
\end{proof}

\end{document}